\newtheorem{theorem}{Theorem}
\newtheorem{lemma}{Lemma}
\theoremstyle{definition}
\newtheorem{remark}{Remark}
\begin{document}
\title[Volterra type operators on minimal M\"{o}bius invariant space]%
{Volterra type operators on minimal M\"{o}bius invariant space}

\author{Huayou Xie, Junming Liu, Saminathan Ponnusamy}

\address{Department of Mathematics, Sun Yat-sen University, Guangzhou, 510275 P.~R.~China}\email{xiehy33@mail2.sysu.edu.cn}

\address{School of Mathematics and Statistics, Guangdong University of Technology, Guangzhou, Guangdong, 510520, P.~R.~China}\email{jmliu@gdut.edu.cn}

\address{S. Ponnusamy, Department of Mathematics, Indian Institute of Technology Madras, Chennai-600 036, India.} \email{samy@iitm.ac.in}

\begin{abstract}
In this note, we mainly study  operator-theoretic properties  on  Besov space $B_{1}$ on the unit disc. This space is the minimal M\"{o}bius invariant space.
 Firstly, we consider the boundedness of Volterra type operators.  Secondly, we prove that Volterra type operators belong to the Deddens algebra of composition operator.  Thirdly, we obtain  estimates for the essential norm of Volterra type operators. Finally, we give a complete characterization of spectrum of Volterra type operators.
\end{abstract}

\thanks{This work was supported by NNSF of China (Grant No. 11801094).}
\keywords{Volterra type operators,  minimal M\"{o}bius invariant space }
\subjclass[2010]{47B38, 30H25}

\maketitle

\section{Introduction and preparation}\label{sec1}
In this paper, $\mathbb{D}$ denotes the open  unit disc and $\mathbb{T}$ be the unit circle.
Let $H(\mathbb{D})$ be the  space of all analytic function on $\mathbb{D}$.
For $0<p<\infty$, the Hardy space $H^p$ consists of analytic functions $f\in\mathbb{D}$ such that
$$\|f\|^p_{H^p}=\sup_{0\leq r<1}\frac{1}{2\pi}\int^{2\pi}_{0}|f(re^{i\theta})|^p\,d\theta<\infty.
$$
If $p=\infty$, then $H^{\infty}$ is the space of bounded analytic functions $f$ in $H(\mathbb{D})$ with
$$\|f\|_{\infty}=\sup\{|f(z)|:z\in \mathbb{D}\}.
$$

For $0<p<\infty$, the Bergman space $A^p$ consists of all function $f$ analytic in  $\mathbb{D}$ such that
$$\|f\|^p_{A^p}=\int_{\mathbb{D}}|f(z)|^p \,dA(z)<\infty,
$$
where $dA(z)$ is the normalized Lebesgue area measure on $\mathbb{D}$. It is clear that $H^p\subset A^p$.  Moreover,  $H^p\subset A^{2p}$ and $\|f\|_{A^{2p}}\leq \|f\|_{H^p}$ for $0<p<\infty$. See \cite{V}, for example.

The Dirichlet type space $\mathcal{D}^p$ is the set of all functions $f\in H(\mathbb{D})$ with
$$\|f\|^p_{\mathcal{D}^p}=|f(0)|^p+\int_{\mathbb{D}}|f'(z)|^p \,dA(z)<\infty.
$$

The space of all conformal automorphisms of $\mathbb{D}$ forms a group, called the M\"{o}bius group and   is denoted by ${\rm Aut}\, (\mathbb{D})$. It is well-known that $\varphi$ belongs to
$ {\rm Aut}\, (\mathbb{D})$ if and only if there exists a real number $\theta$ and a point $a\in \mathbb{D}$ such that
\begin{align*}
\varphi(z)=e^{i\theta}\sigma_{a}(z) ~\mbox{ and }~ \sigma_{a}(z)=\frac{a-z}{1-\overline{a}z},  \quad z\in\mathbb{D}.
\end{align*}
Let $X$ be a Banach  space of analytic functions on $\mathbb{D}$. Then $X$ is said to be M\"{o}bius invariant  whenever $f\circ\varphi\in X$ for all $f\in X$ and $\varphi \in {\rm Aut}\, (\mathbb{D})$  and $\|f\circ\varphi\|_{X}=\|f\|_{X}$.

For $1<p<\infty$, the Besov space $B_p$ consists of analytic functions $f$ in
$\mathbb{D}$ such that
$$\int_{\mathbb{D}}(1-|z|^2)^{p-2}|f'(z)|^{p}\,dA(z)<\infty.
$$
The norm of $B_p$ is defined by
$$\|f\|_{B_p}=|f(0)|+\left(\int_{\mathbb{D}}(1-|z|^2)^{p-2}|f'(z)|^{p}\,dA(z)\right)^{\frac{1}{p}}.
$$
When $p=\infty$, $B_{\infty}=: \mathcal{B}$ is called the classical Bloch space. We define a norm in $\mathcal{B}$ by
$$\|f\|_{\mathcal{B}}=|f(0)|+\sup_{z\in\mathbb{D}}(1-|z|^2)|f'(z)|<\infty.
$$
When $p=2$, $B_2=:\mathcal{D}$ is the classical Dirichlet space. When $p=1$, we get the analytic Besov space $B_1$ which is the minimal M\"{o}bius invariant space consisting of all function $f\in H(\mathbb{D})$ with
$$f(z)=\sum^{\infty}_{k=1}c_k\sigma_{a_k}(z),
$$
where the sequence $\{c_k\}_{k\geq 1}\in \ell^1$ and $\{a_k\}_{k\geq 1}\in \mathbb{D}$. An equivalent norm of $B_1$ is defined as
$$\|f\|_{B_1}=|f(0)|+|f'(0)|+\int_{\mathbb{D}}|f''(z)|\,dA(z).
$$
 Arazy et al. \cite{AJ}  first studied minimal M\"{o}bius invariant space systematically.  More results related to minimal M\"{o}bius invariant space may be seen from \cite{BW,B,C,M,O,W}.

Now, we define several operators on $B_1$. For $g\in H(\mathbb{D})$,
the multiplication operator $M_g$ on $B_1$ is defined by
$$(M_{g})f(z)=f(z)g(z),~ f\in B_1,~ z\in\mathbb{D}.
$$
The differentiation operator is given by $Df=f'$ for each $f\in H(\mathbb{D})$.
Given $g\in H(\mathbb{D})$, the Volterra type operator $T_g$ is defined by
$$(T_gf)(z)=\int^{z}_{0}f(w)g'(w)\,dw, ~\mbox{ for all } f\in B_1.
$$
When $g(z)=z$, the operator $T_zf(z)=\int^{z}_{0}f(w)dw$ becomes the simplest Volterra operator. An integral operator related to $T_g$, denoted by $I_g$, is defined by
$$(I_gf)(z)=\int^{z}_{0}f'(w)g(w)\,dw, ~\mbox{ for all } f\in B_1.
$$

The Volterra type operator $T_g$ was originally studied by Pommerenke \cite{Pom}. Later, a series  of articles appeared on the study of Volterra type integration
operators on classical spaces of analytic functions, such as Hardy space, Bergman space and Dirichlet type spaces etc. For more details, please refer to \cite{A,AS1,AS2,GP,GD}.
In \cite{ZCBP}, \v{C}u\v{c}kovi\'{c} and Paudyal   describe the lattice of the closed invariant subspaces of Volterra type operators.
 Lin et al. \cite{LQ1}  generalized some of the works of \cite{ZCBP} to the general case when $1\leq p<\infty$, and obtained the boundedness of the Volterra type operator $T_g$ and $I_g$ on derivative Hardy space $S_p(\mathbb{D})$. And then they also considered  strict singularity of Volterra type operators on Hardy spaces in \cite{LQ2}. Meanwhile, Lin \cite{L} characterized the boundedness and compactness of the Volterra type operators between Bloch type spaces and weighted Banach spaces. In \cite{MJ}, Miihkinen et al. completely characterize the boundedness of the Volterra type operators  acting from the weighted Bergman spaces  to the Hardy spaces  of the unit ball.

In this paper, we mainly study the  operator-theoretic properties  in minimal M\"{o}bius invariant space $B_1$.  structure of this article is as follows. In Section \ref{sec2},
we discuss the boundedness of the Volterra operator  on $B_1$. In Section \ref{LPX3-sec3}, it is shown whether the integral operator belongs to Deddens algebras.
In Section \ref{sec4}, we will be concerned with the essential norms of integral operators on $B_1$. Section \ref{sec5} is devoted to the study of the spectrum of integral operators on $B_1$.

Throughout this paper, we use the following convention. For two non-negative functions $F$ and $G$ defined on some function space $X,$ we write $F \lesssim G$ if $F(f) \leq C\cdot G(f)$ for all $f \in X$ and for some positive constant $C$ which is independent of $F$ and $G.$
Denote by $F\approx G$ whenever $F\lesssim G \lesssim F$.

\section{Volterra type operators on $B_1$}\label{sec2}

\begin{lemma}\label{le1}
If $f\in H^1$ and $f(z)= \sum^{\infty}_{n=0}a_nz^n$, then
$$\sum^{\infty}_{n=0}\frac{|a_n|}{n+1}\leq\pi\|f\|_{H^1}.
$$
\end{lemma}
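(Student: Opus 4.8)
The plan is to realize the sum $\sum_{n=0}^{\infty} \frac{|a_n|}{n+1}$ as a single integral pairing against the $H^1$ function $f$, and then estimate that integral by $\|f\|_{H^1}$. First I would note that for an analytic $f(z)=\sum_{n=0}^\infty a_n z^n$, the coefficients are recovered by $a_n = \frac{1}{2\pi}\int_0^{2\pi} f(re^{i\theta})e^{-in\theta}\,d\theta \cdot r^{-n}$ for any $0<r<1$, so $|a_n| \le r^{-n}\|f\|_{H^1}$; but this crude bound is not summable after dividing by $n+1$, so a smarter device is needed. The key trick is to choose, for each $n$, a unimodular constant $\varepsilon_n$ with $\varepsilon_n a_n = |a_n|$, and to pair $f$ against a suitable test function. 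Concretely, observe that $\frac{1}{n+1} = \int_0^1 t^n\,dt$, so
\[
\sum_{n=0}^{\infty}\frac{|a_n|}{n+1} = \sum_{n=0}^\infty \varepsilon_n a_n \int_0^1 t^n\,dt = \int_0^1 \left(\sum_{n=0}^\infty \varepsilon_n a_n t^n\right) dt,
\]
but since the $\varepsilon_n$ are not the Taylor coefficients of an $H^1$ function in any controlled way, I would instead pass through the Cauchy/Poisson pairing.

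The cleaner route: recall that for $f\in H^1$ and $0<\rho<1$ one has $a_n \rho^n = \frac{1}{2\pi}\int_0^{2\pi} f(\rho e^{i\theta}) e^{-in\theta}\,d\theta$. Hence, using $\frac{1}{n+1}=\int_0^1 \rho^{2n+1}\,d(\text{something})$—more precisely, integrating $a_n\rho^n$ against $\rho^n \,\frac{2\,d\rho}{\cdots}$—one gets
\[
\frac{a_n}{n+1} = \int_0^1 2\rho \cdot a_n \rho^{2n}\, d\rho \cdot \text{(adjust)},
\]
and summing over $n$ with the unimodular factors $\varepsilon_n$ pulled inside gives
\[
\sum_{n=0}^\infty \frac{|a_n|}{n+1} = \frac{1}{\pi}\int_{\mathbb{D}} f(z)\, \overline{h(z)}\, dA(z)
\]
for an appropriate function $h(z)=\sum \varepsilon_n z^n$; then I would bound $\left|\sum \frac{|a_n|}{n+1}\right| \le \frac{1}{\pi}\|f\|_{L^1(\mathbb{D},dA)}\,\|h\|_{L^\infty}$. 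This works only if $h\in L^\infty(\mathbb{D})$, which fails for arbitrary unimodular $\varepsilon_n$. So the genuinely correct approach is the \emph{duality/kernel} argument: write $\frac{a_n}{n+1}$ as the value at $0$ of $(T_z f)$-type antiderivative, i.e. $\int_0^1 f(t)\,dt$ when $f$ has nonnegative coefficients, and reduce to the case $a_n\ge 0$ by replacing $f$ with $\sum |a_n| z^n$, noting this replacement does not increase the $H^1$ norm (a classical fact) — actually it can increase it, so that is false too.

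The device that actually works, and which I would use, is the following classical one (going back to Hardy--Littlewood): for $f\in H^1$,
\[
\sum_{n=0}^\infty \frac{|a_n|}{n+1} \;=\; \int_0^1 \left|\text{(Abel mean)}\right|\cdots
\]
no — the truly standard proof uses the inequality $\sum_{n\ge 0}\frac{a_n}{n+1} z^{n+1} = \int_0^z f(w)\,dw$ together with the fact that $F(z):=\frac1z\int_0^z f(w)\,dw$ satisfies $\|F\|_{H^1}\lesssim \|f\|_{H^1}$ (the primitive of an $H^1$ function divided by $z$ is again $H^1$, indeed in $H^1$ with the stated constant coming from Hardy's inequality), and then $\sum \frac{|a_n|}{n+1}$ is controlled by the $\ell^1$-type behavior of the coefficients of $F$ via Hardy's inequality $\sum \frac{|b_n|}{n+1}\le \pi \|G\|_{H^1}$. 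Circularity aside, I would simply invoke \textbf{Hardy's inequality}: if $G(z)=\sum_{n\ge 0} b_n z^n \in H^1$ then $\sum_{n\ge 0}\frac{|b_n|}{n+1}\le \pi\|G\|_{H^1}$, which is exactly the statement with $G=f$. Thus the lemma \emph{is} Hardy's inequality, and the proof is: reduce to the disc algebra by approximation ($f_r(z)=f(rz)\to f$ in $H^1$, and $\|f_r\|_{H^1}\le \|f\|_{H^1}$), then for $f$ continuous on $\overline{\mathbb{D}}$ use the Fejér kernel / Cauchy integral representation to write $\frac{a_n}{n+1}$ as an integral of $f$ against a kernel whose $L^\infty$ norm is controlled, summing to the constant $\pi$. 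The main obstacle is precisely pinning down the sharp constant $\pi$: the qualitative inequality $\sum \frac{|a_n|}{n+1}\lesssim \|f\|_{H^1}$ is soft, but getting the constant exactly $\pi$ requires the precise Hardy inequality argument (e.g. via the conjugate function or via testing on the extremal kernel $\frac{1}{1-z}$-type example), so I would either cite Hardy's inequality in its sharp form or reproduce the short proof using the bound $\left|\sum_{n=0}^N \frac{z^{n}}{n+1}\right|\le \pi$ on $\mathbb{T}$ and dominated convergence.
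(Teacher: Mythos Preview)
The paper does not give a proof of this lemma at all; it is simply stated as a known result (it is precisely the classical Hardy inequality for $H^1$, see e.g.\ Duren, \emph{Theory of $H^p$ Spaces}). So after all your exploration, your eventual conclusion ``the lemma \emph{is} Hardy's inequality, cite it'' matches the paper exactly.

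However, the concrete proof you finally sketch is wrong in a key detail. You claim one can use the bound $\bigl|\sum_{n=0}^{N}\tfrac{z^{n}}{n+1}\bigr|\le \pi$ for $z\in\mathbb{T}$ and then pass to the limit. That uniform bound is false: at $z=1$ the partial sum is $\sum_{n=0}^{N}\tfrac{1}{n+1}$, the harmonic partial sum, which tends to infinity. More generally $\sum_{n\ge 0}\tfrac{z^{n}}{n+1}=-z^{-1}\log(1-z)$, whose real part is unbounded on $\mathbb{T}$ near $z=1$. The standard proofs of Hardy's inequality avoid this by using only the \emph{imaginary} part of $-\log(1-z)$, which is bounded by $\pi/2$ on $\mathbb{T}$, together with an inner--outer factorization $f=Bg^{2}$ with $g\in H^{2}$; or, equivalently, by invoking the Fefferman duality $(H^{1})^{*}=\mathrm{BMOA}$ and the fact that $-\log(1-z)\in\mathrm{BMOA}$. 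Your earlier attempts (unimodular twist into an $L^\infty$ kernel, replacing $a_n$ by $|a_n|$) are, as you yourself noted, not salvageable, and the ``primitive/$T_z$'' idea is indeed circular. So: correct identification of the source, but the one proof you actually outline has a genuine gap.
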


The following lemma is a classic exercise in mathematical analysis, but it might be worth to give a brief details of the proof for completeness.

\begin{lemma}\label{le2}
Suppose that $f(x)$ is a continuously increasing function on $[a,b]$. Then
$$\int^{b}_{a}xf(x)dx\geq\frac{a+b}{2}\int^{b}_{a}f(x)\,dx.
$$
\end{lemma}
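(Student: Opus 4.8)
The plan is to prove the Chebyshev-type inequality $\int_a^b x f(x)\,dx \geq \frac{a+b}{2}\int_a^b f(x)\,dx$ for continuously increasing $f$ by a symmetrization argument. First I would rewrite the claim as $\int_a^b \bigl(x - \tfrac{a+b}{2}\bigr) f(x)\,dx \geq 0$, which is the natural form since $x - \frac{a+b}{2}$ is the deviation of $x$ from the midpoint $m := \frac{a+b}{2}$ of the interval. The weight $w(x) := x - m$ is negative on $[a,m)$, zero at $m$, and positive on $(m,b]$, and it is antisymmetric about $m$ in the sense that $w(m+t) = -w(m-t) = t$.

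Next I would exploit this antisymmetry together with the monotonicity of $f$. Splitting the integral at $m$ and substituting $x = m+t$ on $[m,b]$ and $x = m-t$ on $[a,m]$ (note $b - m = m - a = \frac{b-a}{2} =: h$), I get
\begin{align*}
\int_a^b \Bigl(x - \frac{a+b}{2}\Bigr) f(x)\,dx
&= \int_0^{h} t\, f(m+t)\,dt - \int_0^{h} t\, f(m-t)\,dt\\
&= \int_0^{h} t\,\bigl(f(m+t) - f(m-t)\bigr)\,dt.
\end{align*}
Since $f$ is increasing and $m+t \geq m-t$ for $t \geq 0$, the integrand $t\bigl(f(m+t) - f(m-t)\bigr)$ is nonnegative throughout $[0,h]$, so the integral is $\geq 0$, which is exactly the desired inequality.

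I do not expect any serious obstacle here; the only points requiring a word of care are that $f$ is assumed (merely) increasing and continuous, so all integrals are well-defined Riemann integrals and the substitutions are legitimate, and that the two substituted integrals share the common upper limit $h = \frac{b-a}{2}$ precisely because $m$ is the midpoint — this is where the constant $\frac{a+b}{2}$ (rather than some other point) is forced. An alternative, essentially equivalent, route would be to integrate by parts: writing $F(x) = \int_a^x f(t)\,dt$ and using $\int_a^b \bigl(x-m\bigr)f(x)\,dx = \bigl[(x-m)F(x)\bigr]_a^b - \int_a^b F(x)\,dx = h\,F(b) - \int_a^b F(x)\,dx$, one then needs $\int_a^b F(x)\,dx \leq h F(b)$, which follows from convexity-type estimates on $F$; but the symmetrization argument is cleaner and I would present that one.
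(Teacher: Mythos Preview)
Your proof is correct. Both you and the paper begin by rewriting the claim as $\int_a^b\bigl(x-\tfrac{a+b}{2}\bigr)f(x)\,dx\ge 0$, but from there the arguments diverge. You use a symmetrization: splitting at the midpoint $m$ and substituting $x=m\pm t$ reduces the question to the nonnegativity of $\int_0^h t\bigl(f(m+t)-f(m-t)\bigr)\,dt$, which is immediate from monotonicity. The paper instead invokes the second mean value theorem for integrals (valid for monotone $f$): there exists $\xi\in[a,b]$ with
\[
\int_a^b\Bigl(x-\frac{a+b}{2}\Bigr)f(x)\,dx
= f(a)\int_a^\xi\Bigl(x-\frac{a+b}{2}\Bigr)dx + f(b)\int_\xi^b\Bigl(x-\frac{a+b}{2}\Bigr)dx
= \tfrac{1}{2}\bigl(f(b)-f(a)\bigr)(b-\xi)(\xi-a)\ge 0.
\]
Your route is more self-contained, needing nothing beyond a change of variables and the definition of ``increasing''; the paper's route is shorter once the second mean value theorem is taken for granted, and it yields the explicit closed form above. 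Either argument actually works for any increasing $f$ that is merely Riemann integrable, so the continuity hypothesis is not really used in either proof.
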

\begin{proof}
As $f(x)$ is monotonically increasing on $[a,b]$,  the ``integral mean value theorem" shows that there exists a $\xi\in [a,b]$ such that
\begin{align*}
\int^{b}_{a}\left(x-\frac{a+b}{2}\right)f(x)\,dx=&f(a)\int^{\xi}_{a}\left(x-\frac{a+b}{2}\right)dx+f(b)\int^{b}_{\xi}\left(x-\frac{a+b}{2}\right)dx\\
=&\frac{1}{2}(f(b)-f(a))(a\xi+b\xi-\xi^2-ab)\\
=&\frac{1}{2}(f(b)-f(a))(b-\xi)(\xi-a)\geq0.
\end{align*}
This completes the proof of the lemma.
\end{proof}

\begin{lemma}\label{le3}
If $f\in \mathcal{D}^1$, then
$\|f\|_{H^1}\leq\|f\|_{\mathcal{D}^1}.$
\end{lemma}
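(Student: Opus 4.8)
The plan is to estimate the integral mean of $f$ over the circle $|z|=r$ in terms of the area integral of $|f'|$, using the radial fundamental theorem of calculus together with Lemma~\ref{le2} applied to the circular mean of $f'$. First I would fix $r\in[0,1)$ and $\theta\in[0,2\pi]$ and write $f(re^{i\theta})=f(0)+\int_0^r f'(te^{i\theta})e^{i\theta}\,dt$, so that $|f(re^{i\theta})|\le |f(0)|+\int_0^r|f'(te^{i\theta})|\,dt$. Averaging over $\theta$ and applying Tonelli's theorem (the integrand is non-negative) gives
$$\frac{1}{2\pi}\int_0^{2\pi}|f(re^{i\theta})|\,d\theta\le |f(0)|+\int_0^r M(t)\,dt,\qquad M(t):=\frac{1}{2\pi}\int_0^{2\pi}|f'(te^{i\theta})|\,d\theta.$$
Since $M\ge 0$, replacing $\int_0^r$ by $\int_0^\rho$ for any $\rho\in(r,1)$ only enlarges the right-hand side, so it suffices to bound $\int_0^\rho M(t)\,dt$ by $\int_{\mathbb D}|f'|\,dA$.

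This is where Lemma~\ref{le2} enters. On the closed interval $[0,\rho]$ the function $f'$ is continuous, hence $M$ is continuous there; moreover $|f'|$ is subharmonic, so $M(t)$ is a monotonically increasing function of $t$ on $[0,\rho]$. Applying Lemma~\ref{le2} with $a=0$, $b=\rho$ yields $\int_0^\rho tM(t)\,dt\ge\frac{\rho}{2}\int_0^\rho M(t)\,dt$, that is, $\int_0^\rho M(t)\,dt\le\frac{2}{\rho}\int_0^\rho tM(t)\,dt\le\frac{1}{\rho}\int_{\mathbb D}|f'|\,dA$, where I use that in polar coordinates $\int_{\mathbb D}|f'(z)|\,dA(z)=2\int_0^1 tM(t)\,dt$. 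Letting $\rho\to 1^-$ gives $\int_0^r M(t)\,dt\le\int_{\mathbb D}|f'|\,dA$ for every $r<1$, hence $\frac{1}{2\pi}\int_0^{2\pi}|f(re^{i\theta})|\,d\theta\le |f(0)|+\int_{\mathbb D}|f'|\,dA=\|f\|_{\mathcal D^1}$; taking the supremum over $r\in[0,1)$ completes the proof.

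The argument is largely routine, and the only points that require a little care — and therefore where I would be careful — are: (i) recognizing that Lemma~\ref{le2} should be applied to the circular mean $M(t)$, which is monotone by subharmonicity of $|f'|$, and \emph{not} to $|f'(te^{i\theta})|$ along a fixed radius, which need not be monotone; and (ii) the passage to the limit $\rho\to 1^-$, which is necessary because $f'$ need not extend continuously to $\overline{\mathbb D}$, so Lemma~\ref{le2} is only directly available on $[0,\rho]$ with $\rho<1$.
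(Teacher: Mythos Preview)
Your proof is correct and follows essentially the same route as the paper: radial fundamental theorem of calculus, monotonicity of the integral means of $|f'|$ (the paper invokes Hardy's convexity theorem, you invoke subharmonicity --- the same fact for $p=1$), and then Lemma~\ref{le2} to compare $\int M(t)\,dt$ with $2\int tM(t)\,dt$. Your version is actually the more careful of the two, since the paper works directly with the boundary values $f(e^{i\theta})$ and applies Lemma~\ref{le2} on all of $[0,1]$, whereas you stay strictly inside $\mathbb D$, apply Lemma~\ref{le2} on $[0,\rho]$ where continuity is guaranteed, and then let $\rho\to 1^-$.
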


\begin{proof}
Let $f\in \mathcal{D}^1.$ Then we can see that
$$|f(e^{i\theta})|-|f(0)|\leq|f(e^{i\theta})-f(0)|\leq\left|\int^1_0f'(re^{i\theta}) \,dr\right|\leq\int^1_0|f'(re^{i\theta})| \,dr.
$$
Thus,
\begin{align*}
\|f\|_{H_1}=&\frac{1}{2\pi}\int^{2\pi}_{0}|f(e^{i\theta})| \,d\theta\\
\leq&\frac{1}{2\pi}\int^{2\pi}_{0}\left[|f(0)|+\int^{1}_{0}|f'(re^{i\theta})| \,dr\right]d\theta\\
\leq&|f(0)|+\frac{1}{2\pi}\int^{2\pi}_{0}\int^{1}_{0}|f'(re^{i\theta})| \,dr \,d\theta.
\end{align*}
On the other hand,
\begin{align*}
\|f\|_{\mathcal{D}^1}=|f(0)|+\int_{\mathbb{D}}|f'(z)| \,dA(z)
=&|f(0)|+\frac{1}{\pi}\int^{2\pi}_{0}\int^{1}_{0}r|f'(re^{i\theta})| \,dr \,d\theta.
\end{align*}
By Hardy's convexity theorem (see, \cite{MJ1,XZ}), we find that
$$F(r)=\frac{1}{2\pi}\int^{2\pi}_{0}|f'(re^{i\theta})|\,d\theta,\, ~0<r<1,
$$
is a nondecreasing function of $r$.
It follows from Lemma \ref{le2} that
$$\int^{1}_{0}F(r)dr\leq2\int^{1}_{0}rF(r) \,dr.
$$
This shows that $\|f\|_{H^1}\leq\|f\|_{\mathcal{D}^1}$.
\end{proof}

\begin{remark}\label{re1}
In Lemma \ref{le3}, set $f(z)=z$. Then we see that $1=\|z\|_{H^1}\leq\|z\|_{\mathcal{D}^1}=1$ showing that the norm estimate is sharp. This improves the previous conclusion, namely,  $\|f\|_{H^1}\leq2\|f\|_{\mathcal{D}^1}$  from the works of Girela and Merch{\'a}n \cite{GM}.
\end{remark}

\begin{lemma}\label{le4}
If $f\in B_1$, then $\|f\|_{\infty}\leq\pi\|f\|_{B_1}$ and $B_1\subset H^\infty$.
\end{lemma}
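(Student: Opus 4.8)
The plan is to reduce the estimate to the two preceding lemmas by applying them not to $f$ but to $f'$. Start with $f\in B_1$ and write its Taylor expansion $f(z)=\sum_{n=0}^{\infty}a_nz^n$, so that $f'(z)=\sum_{n=0}^{\infty}(n+1)a_{n+1}z^n$. The first point to record is that the very definition of $B_1$, namely $\int_{\mathbb{D}}|f''(z)|\,dA(z)<\infty$, says exactly that $f'\in\mathcal{D}^1$ with $\|f'\|_{\mathcal{D}^1}=|f'(0)|+\int_{\mathbb{D}}|f''(z)|\,dA(z)$; in particular this quantity is finite.

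Next, run $f'$ through the chain Lemma \ref{le3} followed by Lemma \ref{le1}. Lemma \ref{le3} gives $f'\in H^1$ together with $\|f'\|_{H^1}\leq\|f'\|_{\mathcal{D}^1}$. Then Lemma \ref{le1}, applied to $f'$ whose $n$-th Taylor coefficient is $(n+1)a_{n+1}$, yields
$$\sum_{n=1}^{\infty}|a_n|=\sum_{n=0}^{\infty}\frac{|(n+1)a_{n+1}|}{n+1}\leq\pi\|f'\|_{H^1}\leq\pi\|f'\|_{\mathcal{D}^1}=\pi\left(|f'(0)|+\int_{\mathbb{D}}|f''(z)|\,dA(z)\right).$$

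Finally, estimate pointwise: for every $z\in\mathbb{D}$ one has $|f(z)|\leq|a_0|+\sum_{n=1}^{\infty}|a_n|\leq|f(0)|+\pi\big(|f'(0)|+\int_{\mathbb{D}}|f''(z)|\,dA(z)\big)$, and since $\pi>1$ this is bounded by $\pi\|f\|_{B_1}$. Taking the supremum over $z\in\mathbb{D}$ gives $\|f\|_{\infty}\leq\pi\|f\|_{B_1}<\infty$, which proves the norm inequality and, since every $f\in B_1$ is then bounded, also shows $B_1\subset H^{\infty}$. There is no genuine obstacle here once one notices that Lemmas \ref{le1} and \ref{le3} should be fed the derivative of $f$; the only things to be careful about are the reindexing of the Taylor coefficients of $f'$ and the observation that $f\in B_1$ makes all the norms appearing in the argument finite.
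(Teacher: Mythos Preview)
Your proof is correct and follows essentially the same approach as the paper: both apply Lemma~\ref{le1} and Lemma~\ref{le3} to $f'$ (not $f$), obtain $\sum_{n\geq 1}|a_n|\leq\pi\|f'\|_{\mathcal{D}^1}$, and then bound $|f(z)|$ by $|f(0)|+\sum_{n\geq 1}|a_n|$. You are simply more explicit than the paper about the reindexing of the coefficients of $f'$ and the use of $\pi>1$ to absorb the $|f(0)|$ term.
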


\begin{proof}
Assume that $f\in B_1$ and write $f(z)=\sum^{\infty}_{k=0}a_kz^k$. By Lemmas \ref{le1} and \ref{le3}, we have
\begin{align*}
|f(z)|=\left|\sum^{\infty}_{k=0}a_kz^k\right|\leq\sum^{\infty}_{k=0}|a_kz^k|
\leq&\sum^{\infty}_{k=0}|a_k| \leq \pi\|f'\|_{H^1}+|f(0)| \leq \pi\|f'\|_{\mathcal{D}^1}+|f(0)|\leq\pi\|f\|_{B_1},
\end{align*}
for all $z\in \mathbb{D}$.
%
Hence, we obtain that $\|f\|_{\infty}\leq\pi\|f\|_{B_1}$.
\end{proof}

\begin{remark}\label{re2}
In \cite[Theorem 1]{LQ1}, they  obtained that $\|f\|_{\infty}\leq\pi\|f\|_{S^1}$ for each $f\in S^1$,  where the space $S^1$ is defined by $S^1=\{f\in H^1:f'\in H^1\}$. The norm  on $S^1$ is given by
$$\|f\|_{S^1}=|f(0)|+\|f'\|_{H^1}.
$$
Moreover, we obtain  the following  norm estimate
$$\|f\|_{\infty}\leq\pi\|f\|_{S^1}\leq\pi\|f\|_{B_1}\ \ \mbox{for all $f\in B_1$.}
$$
\end{remark}

In the following, we discuss the boundedness of $T_g$ and $I_g$ on minimal M\"{o}bius invariant space.

\begin{theorem}\label{th1}
The operator $T_g$ is  bounded on $B_1$ if and only if $g\in B_1$. Moreover,
$$\|g-g(0)\|\leq \|T_g\|\leq (1+\pi) \|g-g(0)\|_{B_1}.
$$
\end{theorem}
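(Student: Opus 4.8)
The plan is to reduce everything to the concrete norm $\|f\|_{B_1}=|f(0)|+|f'(0)|+\int_{\mathbb D}|f''(z)|\,dA(z)$ and to exploit the fact that differentiating $T_gf$ twice produces $(T_gf)''=f'g'+fg''$. First I would verify the easy direction: if $g\in B_1$, then for $f\in B_1$ we have $(T_gf)(0)=0$, $(T_gf)'(0)=f(0)g'(0)$, so $|(T_gf)(0)|+|(T_gf)'(0)|=|f(0)||g'(0)|$, and
\[
\int_{\mathbb D}|(T_gf)''(z)|\,dA(z)\le \int_{\mathbb D}|f'(z)||g'(z)|\,dA(z)+\int_{\mathbb D}|f(z)||g''(z)|\,dA(z).
\]
By Lemma~\ref{le4}, $\|f\|_\infty\le\pi\|f\|_{B_1}$, so the second term is bounded by $\pi\|f\|_{B_1}\int_{\mathbb D}|g''|\,dA\le\pi\|f\|_{B_1}\|g-g(0)\|_{B_1}$. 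For the first term I need a bound of the form $\int_{\mathbb D}|f'||g'|\,dA\lesssim\|f\|_{B_1}\|g-g(0)\|_{B_1}$; here I would use that $f'$ belongs to the Dirichlet-type space $\mathcal D^1$ (indeed $\|f'\|_{\mathcal D^1}\le\|f\|_{B_1}$ essentially by definition, modulo the $|f'(0)|$ term) together with $f'\in H^1$ and $\|f'\|_{H^1}\le\|f'\|_{\mathcal D^1}$ from Lemma~\ref{le3}, and then control $g'$ by its $H^1$-type behaviour or simply absorb it via $\sup$-type estimates; more cleanly, one can integrate by parts or use that $g'\in\mathcal D^1$ when $g\in B_1$ and pair it against $f'\in H^\infty$-type data. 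Collecting the constants should give $\|T_gf\|_{B_1}\le(1+\pi)\|g-g(0)\|_{B_1}\|f\|_{B_1}$, hence the upper bound on $\|T_g\|$.

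For the lower bound and the converse, the standard trick is to test $T_g$ on the constant function $f\equiv 1$ (which lies in $B_1$ with $\|1\|_{B_1}=1$). Then $(T_g 1)(z)=\int_0^z g'(w)\,dw=g(z)-g(0)$, so $\|T_g\|\ge\|T_g 1\|_{B_1}=\|g-g(0)\|_{B_1}$. This simultaneously proves the left-hand inequality $\|g-g(0)\|_{B_1}\le\|T_g\|$ (I read the ``$\|g-g(0)\|$'' in the statement as the $B_1$-norm) and shows that boundedness of $T_g$ forces $g-g(0)\in B_1$, i.e. $g\in B_1$.

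The main obstacle is the estimate $\int_{\mathbb D}|f'(z)||g'(z)|\,dA(z)\lesssim\|f\|_{B_1}\|g-g(0)\|_{B_1}$, since neither factor is bounded pointwise by the $B_1$-norm in a way that leaves an integrable remainder for free. My preferred route is to note that for $h\in B_1$ one has $h'\in\mathcal D^1\subset H^1$ with norm control, and that $H^1$ functions pair well against the measure $|g'|\,dA$ when $g'$ itself comes from a $B_1$ function; concretely, writing $g'(w)=g'(0)+\int_0^w g''(u)\,du$ and using Fubini together with $\|f'\|_\infty\le\pi\|f'\|_{B_1}$-type control after one differentiation is too lossy, so instead I would bound $\int_{\mathbb D}|f'||g'|\,dA$ by $\|f'\|_{A^1}\|g'\|_\infty$-style duality is also unavailable; the cleanest is: since $g\in B_1\subset\mathcal B$ with $\|g\|_{\mathcal B}\lesssim\|g\|_{B_1}$, we get $(1-|z|^2)|g'(z)|\lesssim\|g-g(0)\|_{B_1}$, whence
\[
\int_{\mathbb D}|f'(z)||g'(z)|\,dA(z)\lesssim\|g-g(0)\|_{B_1}\int_{\mathbb D}\frac{|f'(z)|}{1-|z|^2}\,dA(z),
\]
and the remaining integral is controlled by $\|f\|_{B_1}$ because $\int_{\mathbb D}\frac{|f'(z)|}{1-|z|^2}\,dA(z)\lesssim\int_{\mathbb D}|f''(z)|\,dA(z)+|f'(0)|$ — this last inequality is where I expect to spend real effort, proving it via writing $f'(z)=f'(0)+\int_0^1 z f''(tz)\,dt$, applying Fubini, and estimating $\int_{\mathbb D}\frac{dA(z)}{|1-\bar u z|^{?}(1-|z|^2)}$ type kernels, or alternatively invoking a known embedding $B_1\hookrightarrow$ (a weighted Bergman-type space) from the literature cited in the introduction. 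Once that inequality is in hand, the constant bookkeeping yields exactly $(1+\pi)$.
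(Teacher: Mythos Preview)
Your overall structure --- computing $(T_gf)''=f'g'+fg''$, bounding $\int_{\mathbb D}|f||g''|\,dA$ by $\|f\|_\infty\int_{\mathbb D}|g''|\,dA\le\pi\|f\|_{B_1}\|g-g(0)\|_{B_1}$ via Lemma~\ref{le4}, and testing on $f\equiv 1$ for the lower bound --- matches the paper exactly and is correct.

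The gap is in your treatment of the cross term $\int_{\mathbb D}|f'||g'|\,dA$. The route you finally commit to, namely $(1-|z|^2)|g'(z)|\lesssim\|g-g(0)\|_{B_1}$ followed by the claim
\[
\int_{\mathbb D}\frac{|f'(z)|}{1-|z|^2}\,dA(z)\ \lesssim\ |f'(0)|+\int_{\mathbb D}|f''(z)|\,dA(z),
\]
is false. Take $f(z)=z$: then $f'\equiv 1$, $f''\equiv 0$, so the right-hand side equals $1$, while the left-hand side is $\int_{\mathbb D}(1-|z|^2)^{-1}\,dA(z)=+\infty$. No amount of ``real effort'' with Fubini or kernel estimates will rescue that inequality, and in particular you will not recover the sharp constant $(1+\pi)$ this way.

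The paper's argument for the cross term is much shorter and you were hovering near it when you mentioned $f'\in\mathcal D^1\subset H^1$. Simply apply Cauchy--Schwarz in $A^2$:
\[
\int_{\mathbb D}|f'(z)||g'(z)|\,dA(z)\ \le\ \|f'\|_{A^2}\,\|g'\|_{A^2}\ \le\ \|f'\|_{H^1}\,\|g'\|_{H^1}\ \le\ \|f'\|_{\mathcal D^1}\,\|g'\|_{\mathcal D^1}\ \le\ \|f\|_{B_1}\,\|g-g(0)\|_{B_1},
\]
using the standard embedding $\|h\|_{A^2}\le\|h\|_{H^1}$ and Lemma~\ref{le3}. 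This gives constant $1$ on the cross term; combined with $|f(0)g'(0)|+\int_{\mathbb D}|f||g''|\,dA\le\|f\|_\infty\|g-g(0)\|_{B_1}\le\pi\|f\|_{B_1}\|g-g(0)\|_{B_1}$, you obtain exactly $(1+\pi)$.
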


\begin{proof}
Let $f\in B_{1}$. By H\"{o}lder's inequality and Lemma \ref{le3}, we have
\begin{align*}
\int_{\mathbb{D}}|f'(z)g'(z)| \,dA(z)&\leq\left(\int_{\mathbb{D}}|f'(z)|^2 \, dA(z)\right)^{\frac{1}{2}}\left(\int_{\mathbb{D}}|g'(z)|^2 \,dA(z)\right)^{\frac{1}{2}}
=\|f'\|_{A^2}\|g'\|_{A^2}\\
&\leq\|f'\|_{H^1}\|g'\|_{H^1}  \\
&\leq\|f'\|_{\mathcal{D}^1}\|g'\|_{\mathcal{D}^1} \\
&\leq \|f\|_{B_1}\|g-g(0)\|_{B_1}.
\end{align*}
Hence, we get
\begin{align*}
\|T_gf\|_{B_1}=&\|fg'\|_{\mathcal{D}^1(\mathbb{D})}
= |f(0)g'(0)|+\int_{\mathbb{D}}|f'(z)g'(z)| \,dA(z)+\int_{\mathbb{D}}|f(z)g''(z)| \,dA(z)\\
\leq& \|f\|_{B_1}\|g-g(0)\|_{B_1}+\|f\|_{\infty}\|g-g(0)\|_{B_1}\\
\leq& (1+\pi) \|f\|_{B_1}\|g-g(0)\|_{B_1}
\end{align*}
showing that $T_g$ is a bounded operator on $B_1$.

Conversely, assume that $T_g$ is a bounded operator on $B_1$ and let $f=1$. Then we obtain
$$\|T_g\|\geq\|T_g1\|_{B_1}\geq \|g-g(0)\|_{B_1}
$$
 which gives that $g\in B_1$. Thus,
$$\|g-g(0)\|\leq \|T_g\|\leq (1+\pi) \|g-g(0)\|_{B_1}.
$$
\end{proof}

For $0<p<\infty$, $-2<q<\infty$, and $0\leq s<\infty$, we define the general family of function spaces $F(p,q,s)$ as the set of  all analytic functions $f$ in $\mathbb{D}$ such that
$$\|f\|^{p}_{p,q,s}=|f(0)|+\sup_{a\in \mathbb{D}}\int_{\mathbb{D}}|f'(z)|^{p}(1-|z|^2)^{q}g^{s}(z,a) \,dA(z)<\infty,
$$
where $g(z,a)=\log\frac{1}{|\sigma_a(z)|}$. These spaces were introduced by Zhao in \cite{ZR}. In 2003, R\"{a}tty\"{a} provided the following $n$-th derivation characterization of functions in  spaces $F(p,q,s)$.

\begin{lemma}\cite[Theorem 3.2]{RJ}\label{le5}
Let $f$ be an analytic function on $\mathbb{D}$ and let $0<p<\infty,-2<q<\infty$ and $0\leq s<\infty$. Let $n\in \mathbb{N}$ and $q+s>-1$; or $n=0$ and $q+s-p>-1$. Then $f\in F(p,q,s)$
if and only if
$$\sup_{a\in \mathbb{D}}\int_{\mathbb{D}}|f^{(n)}(z)|^p(1-|z|^2)^{np-p+q}(1-|\varphi_a(z)|^2)^s \,dA(z)<\infty.
$$
\end{lemma}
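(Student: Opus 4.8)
The plan is to obtain the $n$-th order description from the defining ($n=1$) condition by an induction that analyses the effect of a single differentiation on a weighted integral $\int_{\mathbb{D}}|h(z)|^p(1-|z|^2)^{\beta}(1-|\varphi_a(z)|^2)^s\,dA(z)$. First I would record two reductions. One may replace $g^s(z,a)=\big(\log\frac{1}{|\sigma_a(z)|}\big)^s$ by $(1-|\varphi_a(z)|^2)^s$ throughout, uniformly in $a$, since the two weights are pointwise comparable on $\{|\varphi_a(z)|>\frac12\}$, while on the pseudohyperbolic disc $\{|\varphi_a(z)|\le\frac12\}$, which has bounded hyperbolic radius, the sub-mean-value property of $|h|^p$ shows both contribute comparable amounts. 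Also, the identity $1-|\varphi_a(z)|^2=\frac{(1-|a|^2)(1-|z|^2)}{|1-\overline{a}z|^2}$ turns the weight into $(1-|z|^2)^{\beta+s}$ times $(1-|a|^2)^s|1-\overline{a}z|^{-2s}$, so the ``effective radial exponent'' is $\beta+s$. Writing $\alpha_m=mp-p+q$ and $J_m(f)=\sup_{a\in\mathbb{D}}\int_{\mathbb{D}}|f^{(m)}(z)|^p(1-|z|^2)^{\alpha_m}(1-|\varphi_a(z)|^2)^s\,dA(z)$, it then suffices to prove $J_m(f)\approx|f^{(m)}(0)|^p+J_{m+1}(f)$ whenever $\alpha_m+s>-1$, and to iterate. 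The hypotheses provide exactly the needed ranges: $\alpha_m+s=(m-1)p+q+s\ge q+s>-1$ for all $m\ge1$ (these are the only steps used when $n\ge1$, the case $m=1$ being the definition), and $\alpha_0+s=q+s-p>-1$ for the one step used when $n=0$.

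The inequality $J_{m+1}(f)\lesssim J_m(f)$ I would prove by subharmonicity: Cauchy's estimate on $D(z,\rho)$ with $\rho=\frac{1-|z|}{2}$, combined with the sub-mean-value property of $|f^{(m)}|^p$, gives $|f^{(m+1)}(z)|^p\lesssim(1-|z|^2)^{-2-p}\int_{D(z,\rho)}|f^{(m)}(w)|^p\,dA(w)$; multiplying by $(1-|z|^2)^{\alpha_{m+1}}(1-|\varphi_a(z)|^2)^s$, integrating in $z$ and using Fubini together with $1-|z|^2\approx1-|w|^2$, $1-|\varphi_a(z)|^2\approx1-|\varphi_a(w)|^2$ for $w\in D(z,\rho)$ and $\int_{\{z:\,w\in D(z,\rho)\}}(1-|z|^2)^{\alpha_m-2}(1-|\varphi_a(z)|^2)^s\,dA(z)\approx(1-|w|^2)^{\alpha_m}(1-|\varphi_a(w)|^2)^s$, one gets $J_{m+1}(f)\lesssim J_m(f)$, uniformly in $a$ and with no constraint on the parameters.

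The substantive half is the reverse inequality $J_m(f)\lesssim|f^{(m)}(0)|^p+J_{m+1}(f)$, which I would derive from the fundamental theorem of calculus $f^{(m)}(z)-f^{(m)}(0)=\int_0^1f^{(m+1)}(tz)\,z\,dt$ together with a weighted Hardy--Littlewood/Forelli--Rudin estimate: after reducing to the effective radial exponent $\alpha_m+s$, the integral $\int_{\mathbb{D}}|f^{(m)}(z)-f^{(m)}(0)|^p(1-|z|^2)^{\alpha_m}(1-|\varphi_a(z)|^2)^s\,dA(z)$ is controlled by $J_{m+1}(f)$ via a kernel bound whose convergence rests on a one-dimensional integral of the type $\int_0^1(1-r)^{\alpha_m+s}\,dr$, finite precisely because $\alpha_m+s>-1$; for $p\ge1$ this is arranged by Minkowski's inequality in $t$, and for $0<p<1$ by the standard subharmonicity-based pointwise estimates in place of Minkowski. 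The principal obstacle, as I see it, is making this estimate uniform over $a\in\mathbb{D}$: the factor $(1-|a|^2)^s|1-\overline{a}z|^{-2s}$ blows up as $|a|\to1$. The remedy is to split $\mathbb{D}$ into $\{|\varphi_a(z)|\le\frac12\}$, where one changes variables by $\varphi_a$ and applies a plain weighted-Bergman derivative estimate, and its complement, where $1-|\varphi_a(z)|^2\approx(1-|z|^2)/|1-\overline{a}z|^2$ is comparable along the radial segments entering the fundamental theorem of calculus; alternatively one invokes R\"{a}tty\"{a}'s general framework for fractional-derivative characterizations of $F(p,q,s)$, which packages precisely this uniformity.
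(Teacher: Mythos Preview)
The paper does not prove this lemma at all: it is quoted verbatim as \cite[Theorem~3.2]{RJ} and used as a black box, so there is no ``paper's own proof'' to compare against. Your sketch is therefore not a reproduction of anything in the present article but rather an outline of R\"{a}tty\"{a}'s original argument (or a variant thereof). The inductive strategy you describe---reducing $g^s$ to $(1-|\varphi_a|^2)^s$, then proving $J_m(f)\approx|f^{(m)}(0)|^p+J_{m+1}(f)$ via Cauchy estimates on one side and a radial integration combined with Forelli--Rudin-type bounds on the other---is indeed the standard route and matches the approach in \cite{RJ}; the parameter bookkeeping $\alpha_m+s>-1$ you record is correct. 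One caution: in the final paragraph you defer the uniformity in $a$ to ``R\"{a}tty\"{a}'s general framework,'' which is circular if the goal is to supply an independent proof; the splitting argument you sketch just before that is the honest way through, but would need to be carried out in full rather than gestured at.
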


For $p>0$, the space $Z_p$ consists of  all analytic functions $f$ in $\mathbb{D}$ such that
$$\|f\|_{Z_p}=|f(0)|+\sup_{a\in \mathbb{D}}\int_{\mathbb{D}}|(f\circ\sigma_a(z))'|(1-|z|^2)^{p-1} \,dA(z)<\infty.
$$
It is clear that $Z_1=F(1,-1,1)$. For more results about $Z_{p}$ space, see \cite{LLZ} and \cite{Zhu1}.

\begin{theorem}\label{th2}
The operator $I_g$ is bounded on $B_1$ if and only if $g\in Z_{1}\cap H^{\infty}$.
\end{theorem}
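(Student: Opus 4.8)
The plan is to compute $\|I_gf\|_{B_1}$ directly from the norm $\|h\|_{B_1}=|h(0)|+|h'(0)|+\int_{\mathbb D}|h''(z)|\,dA(z)$. Since $(I_gf)(z)=\int_0^z f'(w)g(w)\,dw$, we have $(I_gf)(0)=0$, $(I_gf)'(z)=f'(z)g(z)$, and $(I_gf)''(z)=f''(z)g(z)+f'(z)g'(z)$. Hence
$$\|I_gf\|_{B_1}=|f'(0)g(0)|+\int_{\mathbb D}|f''(z)g(z)+f'(z)g'(z)|\,dA(z),$$
and the problem reduces to controlling the two integrals $\int_{\mathbb D}|f''(z)||g(z)|\,dA(z)$ and $\int_{\mathbb D}|f'(z)||g'(z)|\,dA(z)$ by $\|f\|_{B_1}$. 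For the first integral, $g\in H^\infty$ gives $\int_{\mathbb D}|f''(z)||g(z)|\,dA(z)\le\|g\|_\infty\|f\|_{B_1}$, which is exactly where the $H^\infty$ hypothesis enters. For the second integral, the key observation is that $\int_{\mathbb D}|f'(z)||g'(z)|\,dA(z)$ should be estimated by pairing a ``test'' growth estimate on $f'$ against the $Z_1$-norm of $g$; the natural route is to use Lemma \ref{le4} in the form that $f\in B_1$ forces $f'$ to be small, and then recognize $\sup_{a}\int_{\mathbb D}|f'(z)|(1-|z|^2)^{?}(1-|\sigma_a(z)|^2)\,dA(z)$-type quantities. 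Concretely, I would first establish the auxiliary fact that $f\in B_1$ implies $\int_{\mathbb D}|f'(z)|\,dA(z)\lesssim\|f\|_{B_1}$ (integrate $f''$ and use Fubini, or cite the $\mathcal D^1$ embedding already used), and more usefully that $(1-|z|^2)|f'(z)|\lesssim\|f\|_{B_1}$ since $B_1\subset\mathcal B$ with control of the Bloch seminorm by $\int_{\mathbb D}|f''|\,dA$.

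For the sufficiency direction, the cleanest approach is: given $g\in Z_1\cap H^\infty$, bound
$$\int_{\mathbb D}|f'(z)||g'(z)|\,dA(z)\le\|f'\|_{\mathcal B}\int_{\mathbb D}\frac{|g'(z)|}{1-|z|^2}\,dA(z)\lesssim\|f\|_{B_1}\,\|g\|_{Z_1},$$
using that $Z_1=F(1,-1,1)$ so that (taking $a=0$ in the defining supremum, or applying Lemma \ref{le5} with $n=0$, $p=1$, $q=-1$, $s=1$) $\int_{\mathbb D}|g'(z)|(1-|z|^2)^{-1}\,dA(z)\le\|g\|_{Z_1}<\infty$; combined with $\|f'\|_{\mathcal B}\lesssim\|f\|_{B_1}$ (which follows since $(1-|z|^2)|f'(z)|\le|f'(0)|+\int_{\mathbb D}|f''|\,dA$ after an elementary estimate, or directly from $B_1\subset\mathcal B$). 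Together with the $H^\infty$ bound on the first integral and $|f'(0)|\lesssim\|f\|_{B_1}$, this yields $\|I_gf\|_{B_1}\lesssim(\|g\|_\infty+\|g\|_{Z_1})\|f\|_{B_1}$, so $I_g$ is bounded.

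For the necessity direction, assume $I_g$ is bounded on $B_1$. Testing on $f\equiv 1$ is useless since $f'=0$; instead, test on $f(z)=z$, giving $(I_gz)'=g$, so $(I_gz)'(0)=g(0)$ and $(I_gz)''=g'$, whence $|g(0)|+\int_{\mathbb D}|g'(z)|\,dA(z)=\|I_gz\|_{B_1}\le\|I_g\|\,\|z\|_{B_1}<\infty$; this already gives $g\in\mathcal D^1$, but I need the stronger $Z_1\cap H^\infty$ conclusion. To get $g\in H^\infty$, I would test $I_g$ against the M\"obius-invariant family $f_a(z)=\sigma_a(z)-\sigma_a(0)$ (or $\log\frac{1}{1-\bar a z}$-type atoms normalized in $B_1$) and extract a pointwise bound on $g$ from the resulting uniform $B_1$-norm estimate, using the growth estimate of Lemma \ref{le4}. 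To get $g\in Z_1$, I would similarly test against the normalized M\"obius shifts $f=k_a$ with $\|k_a\|_{B_1}\lesssim 1$ chosen so that $|k_a'(z)|$ is bounded below on a region where the $Z_1$-integrand concentrates; a standard change of variables $w=\sigma_a(z)$ then converts $\int_{\mathbb D}|(I_gf)''|\,dA\gtrsim\int_{\mathbb D}|(g\circ\sigma_a)'(w)|(1-|w|^2)^0\cdots$ into the $Z_1$-seminorm. The main obstacle I anticipate is precisely this necessity argument: choosing the right normalized test functions in $B_1$ (whose elements are $\ell^1$-sums of M\"obius atoms, not an obviously flexible family) so that both the $H^\infty$ lower bound and the $Z_1$ lower bound come out cleanly, and handling the cross term $f''g$ versus $f'g'$ so that neither piece spuriously dominates. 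I would isolate that step as a separate lemma on the action of $I_g$ on M\"obius atoms $\sigma_a$ before assembling the proof.
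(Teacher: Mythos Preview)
Your sufficiency argument has a genuine gap in the estimate of $\int_{\mathbb D}|f'(z)||g'(z)|\,dA(z)$. You propose to bound it by $\|f'\|_{\mathcal B}\int_{\mathbb D}|g'(z)|(1-|z|^2)^{-1}\,dA(z)$ and then claim the latter integral is controlled by $\|g\|_{Z_1}$. This is false. Taking $a=0$ in the $Z_p$ definition gives only $\int_{\mathbb D}|g'(z)|\,dA(z)$, not the weighted integral you need; and your attempted appeal to Lemma~\ref{le5} with $n=0$, $p=1$, $q=-1$, $s=1$ fails the hypothesis $q+s-p>-1$ (you get $-1>-1$). Concretely, $g(z)=z$ lies in $Z_1\cap H^\infty$ (indeed in $B_1$), yet $\int_{\mathbb D}(1-|z|^2)^{-1}\,dA(z)=\infty$. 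So the Bloch-times-weighted-$g'$ route cannot work.

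The paper handles this term by invoking a lemma from \cite{LLZ} which gives directly
\[
\int_{\mathbb D}|f'(w)||g'(w)|\,dA(w)\lesssim \|g\|_{Z_1}\Bigl(|f'(0)|+\int_{\mathbb D}|f''(w)|\,dA(w)\Bigr),
\]
that is, $Z_1$ acts as a multiplier from $\mathcal D^1$ into $L^1(dA)$ at the level of derivatives. This is a duality-type embedding statement about $Z_1$, not a pointwise weight bound, and it is exactly what your argument is missing. If you do not want to cite \cite{LLZ}, you would need to reprove a version of this multiplier estimate; the Bloch bound on $f'$ alone is too crude.

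Your necessity sketch is on the right track and close to the paper's approach. The paper tests on $f=\sigma_a$, uses $\|I_g\sigma_a\|_{B_1}=\|g\,\sigma_a'\|_{\mathcal D^1}\ge\|g\,\sigma_a'\|_{H^1}$ (via Lemma~\ref{le3}) together with a change of variables and subharmonicity to get $|g(a)|\le\|I_g\sigma_a\|_{B_1}$, hence $g\in H^\infty$. For $Z_1$ it uses the reverse triangle inequality $\|I_g\sigma_a\|_{B_1}\ge\int|\sigma_a'||g'|\,dA-\int|\sigma_a''||g|\,dA$ and absorbs the second term by $\|g\|_\infty\|\sigma_a\|_{B_1}$; the uniform bound on $\int|\sigma_a'||g'|\,dA$ is exactly the $Z_1$ seminorm after the change of variables you allude to. So your ``separate lemma on M\"obius atoms'' would reproduce this, but note that the $H^\infty$ step is cleanest via the $\mathcal D^1\hookrightarrow H^1$ embedding rather than the growth bound of Lemma~\ref{le4}.
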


\begin{proof}
Assume that $g \in Z_{1}\cap H^{\infty}$.
Using Lemma $6$ of \cite{LLZ}, we obtain that
$$\int_{\mathbb{D}}|f'(w)|\cdot|g'(w)| \,dA(w)\lesssim \|g\|_{Z_{1}} \left(|f'(0)|+\int_{\mathbb{D}}|f''(w)| \,dA(w)\right).
$$
 Thus, we have
\begin{align*}
\|(I_gf)(z)\|_{B_1}=&\|f'g\|_{\mathcal{D}^1}\\
=& |f'(0)g(0)|+ \int_{\mathbb{D}}|f''(w)|\cdot|g(w)| \,dA(w)+\int_{\mathbb{D}}|f'(w)|\cdot|g'(w)| \,dA(w)\\
\lesssim & |f'(0)g(0)|+ \|g\|_{\infty}\int_{\mathbb{D}}|f''(w)| \,dA(w)+\|g\|_{Z_{1}}\left(|f'(0)|+\int_{\mathbb{D}}|f''(w)| \,dA(w)\right)\\
\lesssim&(\|g\|_{\infty}+\|g\|_{Z_{1}})\|f\|_{B_1}
\end{align*}
for all $f\in B_1$. This implies that $I_g$ is a bounded operator on $B_1$.

Conversely, suppose that $I_g$ is a bounded operator on $B_1$. For each $a\in \mathbb{D}$,
let $f(z)=\sigma_{a}(z)$. Then  $\|f\|_{B_1}\lesssim  1$ and
\begin{align*}
\|I_g\| \gtrsim &\|I_gf\|_{B_1}
=\|g(w)f'(w)\|_{\mathcal{D}^1}\\
\geq &\|g(w)f'(w)\|_{H^1}
=\int^{2\pi}_{0}|g(e^{i\theta})|\cdot|\sigma'_{a}(e^{i\theta})| \,d\theta
=\int^{2\pi}_{0}|g(\sigma_{a}(e^{i\theta}))| \,d\theta\\
\geq&  |g(\sigma_{a}(0))|=|g(a)|
\end{align*}
which shows that $\|I_g\|\gtrsim \|g\|_{\infty}$.
Hence, $g\in H^{\infty}$. Moreover,
\begin{align*}
\|I_g\| & \gtrsim \|I_g\sigma_a\|_{B_1}\\
&\geq\int_{\mathbb{D}}|\sigma'_{a}(w)|\cdot|g'(w)| \,dA(w)-\int_{\mathbb{D}}|\sigma''_{a}(w)|\cdot|g(w)| \,dA(w)\\
&\geq\int_{\mathbb{D}}|\sigma'_{a}(w)|\cdot|g'(w)| \,dA(w)-\|g\|_{\infty}\|\sigma_a\|_{B_1},
\end{align*}
from which it follows that
$$\|I_g\|+\|g\|_{\infty}\|\sigma_a\|_{B_1} \gtrsim  \int_{\mathbb{D}}|\sigma'_{a}(w)|\cdot|g'(w)| \,dA(w).
$$
This implies that $g\in Z_1$. Therefore, $g\in Z_1\cap H^{\infty}$. This completes the proof of the theorem.
\end{proof}

\begin{remark}\label{re3}
Note  that $B_1\subset Z_1\cap H^{\infty}.$  By Lemma \ref{le4}, we know that $f\in H^{\infty}$ whenever $f\in B_1$.
Using the $2$-nd derivation characterization of functions in  $Z_1$, we have
\begin{align*}
\|f\|_{Z_1}\approx |f(0)|+\sup_{a\in \mathbb{D}}\int_{\mathbb{D}}|f''(z)|(1-|\sigma_a(z)|^2) \,dA(z)\leq |f(0)|+\int_{\mathbb{D}}|f''(z)| \,dA(z)\leq\|f\|_{B_1}.
\end{align*}
Then $f\in Z_1$ and thus, $B_1\subset Z_1\cap H^{\infty}$.
\end{remark}

Now we define the space $B^0_1$ as
$$B^0_1=\{f\in B_1:f(0)=0\}.
$$
The following theorem  gives the connection between the Volterra operator $T_z$ on $\mathcal{D}^1$ and the multiplication operator $M_z$ on $B^0_1$.

\begin{theorem}\label{th3}
The Volterra type operator $T_{z}:\mathcal{D}^1\rightarrow  B^0_1$ is bounded and invertible with $T^{-1}_z= D .$
\end{theorem}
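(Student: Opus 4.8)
The plan is to show directly that $T_z$ maps $\mathcal{D}^1$ into $B^0_1$, is bounded, and has $D$ as a two-sided inverse. Note first that $(T_z f)(z) = \int_0^z f(w)\,dw$, so $(T_z f)(0) = 0$ and $(T_z f)' = f$; this immediately gives $D T_z = \mathrm{id}$ on $\mathcal{D}^1$. Conversely, for $f \in B^0_1$ we have $f(0) = 0$, so $(T_z D f)(z) = \int_0^z f'(w)\,dw = f(z) - f(0) = f(z)$, giving $T_z D = \mathrm{id}$ on $B^0_1$. Thus once boundedness of $T_z$ and $D$ (on the relevant spaces) is established, invertibility with $T_z^{-1} = D$ follows formally.

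For boundedness of $T_z : \mathcal{D}^1 \to B^0_1$, take $f \in \mathcal{D}^1$ and set $h = T_z f$. Then $h(0) = 0$, $h'(0) = f(0)$, and $h''(z) = f'(z)$. Using the equivalent norm on $B_1$,
\begin{align*}
\|T_z f\|_{B_1} &= |h(0)| + |h'(0)| + \int_{\mathbb{D}} |h''(z)|\,dA(z)\\
&= |f(0)| + \int_{\mathbb{D}} |f'(z)|\,dA(z) = \|f\|_{\mathcal{D}^1},
\end{align*}
so in fact $T_z$ is an isometry. In particular it is bounded and injective, and its range lies in $B^0_1$. For the reverse direction, boundedness of $D : B^0_1 \to \mathcal{D}^1$: given $f \in B^0_1$, the function $Df = f'$ satisfies $\|f'\|_{\mathcal{D}^1} = |f'(0)| + \int_{\mathbb{D}} |f''(z)|\,dA(z) \leq |f(0)| + |f'(0)| + \int_{\mathbb{D}}|f''(z)|\,dA(z) = \|f\|_{B_1}$, using $f(0)=0$. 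So $D$ is bounded from $B^0_1$ to $\mathcal{D}^1$ (with norm at most $1$).

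Combining: $T_z : \mathcal{D}^1 \to B^0_1$ is a bounded isometry that is surjective, since for any $h \in B^0_1$ we have $h = T_z(Dh)$ with $Dh \in \mathcal{D}^1$. Hence $T_z$ is invertible, and since $DT_z = \mathrm{id}_{\mathcal{D}^1}$ and $T_z D = \mathrm{id}_{B^0_1}$, the inverse is precisely $T_z^{-1} = D$. I do not expect any real obstacle here: the only minor care needed is bookkeeping the three norm terms $|f(0)|$, $|f'(0)|$, $\int |f''|$ correctly when the derivative or antiderivative shifts them, and checking that the constant function $f(0)$ term of $f\in\mathcal D^1$ really does land on the $|h'(0)|$ slot of the $B_1$ norm — which is exactly what makes $T_z$ an isometry rather than merely bounded.
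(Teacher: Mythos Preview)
Your proof is correct and follows essentially the same approach as the paper: both verify the identities $DT_z=\mathrm{id}_{\mathcal{D}^1}$ and $T_zD=\mathrm{id}_{B^0_1}$ via the fundamental theorem of calculus, and then check boundedness. Your version is in fact slightly sharper, since you observe that $\|T_zf\|_{B_1}=\|f\|_{\mathcal{D}^1}$ exactly (an isometry), whereas the paper records only $\|T_zf\|_{B_1}\lesssim\|f\|_{\mathcal{D}^1}$; you also bound $D$ directly, while the paper leaves the boundedness of the inverse implicit.
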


\begin{proof}
First, we show that $T_z(\mathcal{D}^1)=B^0_1.$ For  $f\in \mathcal{D}^1$, we consider
$$F(z):=(T_zf)(z)=\int^{z}_{0}f(w)\,dw.
$$
 Clearly, $F'=f\in \mathcal{D}^1$ so that  $F\in B^0_1$ and  $T_{z}(\mathcal{D}^1)\subseteq B^0_1.$

Conversely, for each $F\in B^0_1$, we have $F' \in \mathcal{D}^1$.  Then
$$(T_z(F'(z)))=\int^{z}_{0}F'(w)\,dw=F(z)-F(0)=F(z).
$$
Then $B^0_1 \subseteq T_{z}(\mathcal{D}^1)$. This implies that   $T_{z}(\mathcal{D}^1)=B^0_1$.

Secondly, we show that $T_z:\mathcal{D}^1 \rightarrow B^0_1$ is a bounded isomorphism, and its inverse $T^{-1}_{z}= D.$  Recall from the above
discussion that $(T_z(F'(z)))=F(z)$ for  $F\in B^0_1$. Then, for each $f\in\mathcal{D}^1$, we have that $(D(T_zf))(z)=f(z)$. This implies that $T_z$ is a bijection from $\mathcal{D}^1$ onto $B^0_1$. Since $T_z$ is linear and $T_z$ is an isomorphism from $\mathcal{D}^1$ onto $B^0_1$.

Finally, we need to prove that $T_z$ is a bounded operator on $\mathcal{D}^1$. For $f\in \mathcal{D}^1$, we have
$$\|T_zf\|_{B_1}\lesssim ( |(T_zf)(0)|+\|f\|_{\mathcal{D}^1})\lesssim \|f\|_{\mathcal{D}^1}.
$$
 Therefore, $T_z$ is a bounded isomorphism from $\mathcal{D}^1$ onto $B^0_1$.
\end{proof}

Let us introduce an addition operator $P$ defined by
$$(Pf)(z)=(M_zf)(z)+(T_zf)(z)\ \mbox{ for $f\in H(\mathbb{D})$ and $z\in\mathbb{D}$.}
$$

\begin{theorem}\label{th4}
Let $T_{z}:\mathcal{D}^1\rightarrow B^0_1$ and $M_z:B^0_1\rightarrow \mathcal{D}^1$ be the Volterra type operator and the multiplication operator, respectively. Then $P$ is an operator on $\mathcal{D}^1$ with
$P=T^{-1}_{z}M_zT_z.$
\end{theorem}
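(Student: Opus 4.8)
The plan is to verify the operator identity $P = T_z^{-1} M_z T_z$ directly by composing the three operators and checking that the result agrees with $P = M_z + T_z$ on an arbitrary $f \in \mathcal{D}^1$. By Theorem \ref{th3}, $T_z^{-1} = D$ as an operator $B^0_1 \to \mathcal{D}^1$, so the right-hand side is $D M_z T_z$, and this is a well-defined operator on $\mathcal{D}^1$ precisely because $T_z$ maps $\mathcal{D}^1$ into $B^0_1$, then $M_z$ maps $B^0_1$ into $\mathcal{D}^1$, and then $D = T_z^{-1}$ maps back... wait, one must be careful: $M_z$ maps $B^0_1$ into $\mathcal{D}^1$, but $D$ is the inverse of $T_z \colon \mathcal{D}^1 \to B^0_1$, so $D$ should be applied to an element of $B^0_1$. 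So I should first note $M_z T_z f \in B^0_1$ (it vanishes at $0$ since it has a factor of $z$), hence $D(M_z T_z f) = T_z^{-1}(M_z T_z f)$ makes sense.

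The computation itself is short. First I would write $F(z) := (T_z f)(z) = \int_0^z f(w)\,dw$, so that $F'(z) = f(z)$. Then $(M_z F)(z) = z F(z) = z \int_0^z f(w)\,dw$, which indeed vanishes at $z = 0$, so it lies in $B^0_1$ (using that $F \in B^0_1 \subset \mathcal{D}^1$ and hence $zF \in \mathcal{D}^1$ with value $0$ at the origin, as established in the proof of Theorem \ref{th4}'s hypothesis / Theorem \ref{th3}). Applying $D = T_z^{-1}$:
\begin{align*}
\bigl(T_z^{-1} M_z T_z f\bigr)(z) = \bigl(D(M_z F)\bigr)(z) = \frac{d}{dz}\Bigl(z \int_0^z f(w)\,dw\Bigr) = \int_0^z f(w)\,dw + z f(z) = (T_z f)(z) + (M_z f)(z),
\end{align*}
which is exactly $(Pf)(z)$. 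This shows $P = T_z^{-1} M_z T_z$ as operators on $\mathcal{D}^1$, and in particular $P$ maps $\mathcal{D}^1$ into itself because each factor in the composition is a bounded operator between the indicated spaces (by Theorem \ref{th3} and the stated boundedness of $M_z \colon B^0_1 \to \mathcal{D}^1$).

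There is no real obstacle here — the only points requiring care are bookkeeping ones: (i) confirming that $M_z T_z f \in B^0_1$ so that $T_z^{-1}$ may legitimately be applied to it, rather than just formally writing $D$; (ii) being explicit that $D$ and $T_z^{-1}$ coincide on $B^0_1$, which is the content of Theorem \ref{th3}; and (iii) noting that $P$ is genuinely an operator \emph{on} $\mathcal{D}^1$ (not merely a formal expression) precisely because it factors as a composition of the three bounded maps $\mathcal{D}^1 \xrightarrow{T_z} B^0_1 \xrightarrow{M_z} \mathcal{D}^1 \xrightarrow{T_z} B^0_1 \xrightarrow{T_z^{-1}} \mathcal{D}^1$. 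Wait, that chain is off — the correct factorization is $\mathcal{D}^1 \xrightarrow{T_z} B^0_1 \xrightarrow{M_z} \mathcal{D}^1$ and then one needs $T_z^{-1}$ applied to... Actually the cleanest statement is: $M_z T_z$ has range in $B^0_1$ (functions vanishing at $0$), and on $B^0_1$ we have $T_z^{-1} = D$; so $T_z^{-1} M_z T_z$ is well-defined and bounded on $\mathcal{D}^1$, and the displayed differentiation computation identifies it with $M_z + T_z = P$. I would present the argument in this order, keeping the single display above as the heart of the proof.
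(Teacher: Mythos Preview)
Your proposal is correct and follows essentially the same route as the paper: set $F = T_z f$, use $T_z^{-1} = D$ from Theorem~\ref{th3}, and compute $D(zF(z)) = F(z) + zF'(z) = (T_z f)(z) + (M_z f)(z) = (Pf)(z)$. The paper's proof is just this single chain of equalities (written in the reverse order, starting from $Pf$), without the extra bookkeeping remarks about $M_z T_z f \in B^0_1$; your additional care on that point is fine but not strictly needed beyond what Theorem~\ref{th3} already provides.
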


\begin{proof}
Let $f\in \mathcal{D}^1$ and $F=T_zf$. Then we get
\begin{align*}
(Pf)(z)=&(M_zf)(z)+(T_zf)(z)
=zf(z)+F(z)
=D(zF(z))
=(T_z^{-1}M_zT_zf)(z),
\end{align*}
which shows that $P=T^{-1}_{z}M_zT_z$.
\end{proof}

\begin{theorem}\label{th5}
If $f,g\in B_1$, then $\|fg\|_{B_1}\leq(2\pi+2)\|f\|_{B_1}\|g\|_{B_1}$.
\end{theorem}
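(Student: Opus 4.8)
The plan is to use the equivalent norm $\|h\|_{B_1} = |h(0)| + |h'(0)| + \int_{\mathbb{D}} |h''(z)|\, dA(z)$ and expand the second derivative of the product via the Leibniz rule: $(fg)'' = f''g + 2f'g' + fg''$. This reduces the problem to estimating three integrals, namely $\int_{\mathbb{D}} |f''g|\, dA$, $\int_{\mathbb{D}} |f'g'|\, dA$, and $\int_{\mathbb{D}} |fg''|\, dA$, together with the point-evaluation terms $|f(0)g(0)|$ and $|(fg)'(0)| = |f'(0)g(0) + f(0)g'(0)|$.

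First I would handle the two extreme terms $\int_{\mathbb{D}} |f''g|\, dA$ and $\int_{\mathbb{D}} |fg''|\, dA$ by pulling out the sup-norm: by Lemma \ref{le4}, $\|g\|_\infty \le \pi \|g\|_{B_1}$ and $\|f\|_\infty \le \pi \|f\|_{B_1}$, so each of these integrals is bounded by $\pi \|f\|_{B_1}\|g\|_{B_1}$ (using $\int_{\mathbb{D}}|f''|\,dA \le \|f\|_{B_1}$, etc.). Next I would treat the cross term $\int_{\mathbb{D}} |f'g'|\, dA$ exactly as in the proof of Theorem \ref{th1}: by Cauchy--Schwarz it is at most $\|f'\|_{A^2}\|g'\|_{A^2} \le \|f'\|_{H^1}\|g'\|_{H^1} \le \|f'\|_{\mathcal{D}^1}\|g'\|_{\mathcal{D}^1} \le \|f\|_{B_1}\|g\|_{B_1}$, using Lemma \ref{le3} and the fact that $\|f'\|_{\mathcal{D}^1} \le \|f\|_{B_1}$. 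This cross term contributes $2\|f\|_{B_1}\|g\|_{B_1}$ after the factor $2$ from Leibniz.

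Finally I would collect the boundary terms. We have $|f(0)g(0)| \le \|f\|_{B_1}\|g\|_{B_1}$ trivially (since $|f(0)| \le \|f\|_{B_1}$), and $|f'(0)g(0) + f(0)g'(0)| \le |f'(0)||g(0)| + |f(0)||g'(0)| \le 2\|f\|_{B_1}\|g\|_{B_1}$. Adding everything: $|f(0)g(0)| + |(fg)'(0)| + \int_{\mathbb{D}}|(fg)''|\,dA \le \|f\|_{B_1}\|g\|_{B_1} + 2\|f\|_{B_1}\|g\|_{B_1} + (\pi + 2 + \pi)\|f\|_{B_1}\|g\|_{B_1}$. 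I would then reorganize the bookkeeping so the constants sum to at most $2\pi + 2$ — noting that the crude bounds above are wasteful (for instance, $|f(0)g(0)| + |f'(0)g(0)| + |f(0)g'(0)| \le (|f(0)|+|f'(0)|)(|g(0)|+|g'(0)|) \le \|f\|_{B_1}\|g\|_{B_1}$ absorbs three terms into one), which brings the total to $(1 + 2 + 2\pi)\|f\|_{B_1}\|g\|_{B_1} = (2\pi+3)\|f\|_{B_1}\|g\|_{B_1}$; a slightly sharper treatment of the pointwise estimates (e.g. $\|g\|_\infty \le \pi\|g'\|_{H^1} + |g(0)|$ and combining with the evaluation terms) is needed to shave this down to $2\pi + 2$.

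The main obstacle will be precisely this constant-chasing: getting from the easy bound $(2\pi+3)$ down to the claimed $(2\pi+2)$ requires being careful not to double-count the $|f(0)|$, $|g(0)|$ contributions between the point-evaluation part of the norm and the $\|f\|_\infty$, $\|g\|_\infty$ estimates used on the $f''g$ and $fg''$ integrals. Concretely, I expect to use $\int_{\mathbb{D}}|f''g|\,dA \le \|g\|_\infty \int_{\mathbb{D}}|f''|\,dA$ with the sharper form $\|g\|_\infty \le \pi\|g'\|_{H^1} + |g(0)| \le \pi\|g'\|_{\mathcal{D}^1} + |g(0)|$, and similarly track $|f(0)|$, so that the ``$+|g(0)|$'' and ``$+|f(0)|$'' pieces recombine with the boundary terms rather than adding fresh copies of $\|f\|_{B_1}\|g\|_{B_1}$. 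The analytic content is entirely contained in Lemmas \ref{le1}--\ref{le4}; everything else is organizing the Leibniz expansion so the constants land correctly.
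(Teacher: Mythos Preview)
Your strategy matches the paper's: Leibniz on $(fg)''$, then $\|\cdot\|_\infty$ on the two end terms and the Cauchy--Schwarz/$H^1$/$\mathcal D^1$ chain on the cross term. The only issue is the final constant, and the paper's fix is simpler and cleaner than the refinement of $\|g\|_\infty$ you propose. The point is to treat the two end terms \emph{asymmetrically}: keep the exact identity
\[
\int_{\mathbb D}|f''|\,dA \;=\; \|f\|_{B_1}-|f(0)|-|f'(0)|
\]
on the $f''g$ integral, and pair it with the boundary terms bounded only on the $g$ side, i.e.\ $(|f(0)|+|f'(0)|)(|g(0)|+|g'(0)|)\le \|g\|_{B_1}(|f(0)|+|f'(0)|)$. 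Then
\[
\|g\|_{B_1}(|f(0)|+|f'(0)|)+\|g\|_\infty\bigl(\|f\|_{B_1}-|f(0)|-|f'(0)|\bigr)
\le \pi\|f\|_{B_1}\|g\|_{B_1},
\]
since $\|g\|_\infty\le\pi\|g\|_{B_1}$ and the coefficient of $|f(0)|+|f'(0)|$ becomes $1-\pi<0$. This merges the ``boundary'' contribution and the $\int|f''g|$ contribution into a single $\pi\|f\|_{B_1}\|g\|_{B_1}$ rather than $(1+\pi)\|f\|_{B_1}\|g\|_{B_1}$; the remaining $2\int|f'g'|$ and $\int|fg''|\le\|f\|_\infty\int|g''|$ give $2+\pi$, and you land exactly on $2\pi+2$. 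Your proposed route via $\|g\|_\infty\le\pi\|g'\|_{\mathcal D^1}+|g(0)|$ is headed in a workable direction but is more involved; the asymmetric bookkeeping above is the intended shortcut.
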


\begin{proof}
For $f,g\in B_1$, we get
\begin{align*}
\|f g\|_{B_1}=&|f(0)g(0)|+|f'(0)g'(0)|\\
&+\int_{\mathbb{D}}|f''(w)g(w)+2f'(w)g'(w)+f(w)g''(w)| \,dA(w)\\
\leq& (|f(0)|+|f'(0)|)(|g(0)|+|g'(0)|)+\int_{\mathbb{D}}|f''(w)g(w)| \,dA(w)\\
&+2\int_{\mathbb{D}}|f'(w)g'(w)| \,dA(w)+\int_{\mathbb{D}}|f(w)g''(w)| \,dA(w)\\
\leq&\|g\|_{B_1}(|f(0)|+|f'(0)|) +\|g\|_{\infty} (\|f\|_{B_1}-|f(0)|-|f'(0)|)\\
&+2\|f\|_{B_1}\|g\|_{B_1}+\|f\|_{\infty}\int_{\mathbb{D}}|g''(w)| \,dA(w)\\
\leq&\|g\|_{B_1}(|f(0)|+|f'(0)|)+\pi\|g\|_{B_1}(\|f\|_{B_1}-|f(0)|-|f'(0)|)\\
&+2\|f\|_{B_1}\|g\|_{B_1}+\pi\|f\|_{B_1}\|g\|_{B_{1}}\\
\leq&(2\pi+2)\|f\|_{B_1}\|g\|_{B_1}
\end{align*}
and the proof is complete.
\end{proof}

\begin{remark}
In \cite[Theorem10]{AJ}, Arazy et al. obtained $\|fg\|\leq7\|f\|\|g\|$ for  $f,g\in B_1$, in which they defined the norm of $f\in B_1$ as
$$\|f\|=\inf\left\{\sum^{\infty}_{k=1}|c_{k}|:f(z)=\sum^{\infty}_{k=1}c_k\sigma_{a_k}(z)\right\}.
$$
Inspired by their work, we derived Theorem \ref{th5} and we are not sure whether the constant $2\pi+2$ in Theorem \ref{th5} is optimal or not.
\end{remark}

\section{Deddens algebras}\label{LPX3-sec3}
Let $\mathcal{L}(X)$ denote the algebra of all bounded linear operators on a complex Banach space $X$. A nontrivial invariant subspace of an operator $A\in\mathcal{L}(X)$ is, by definition, a closed subspace $M$ of $X$ such that $M\neq \{0\}$, $M\neq X$, and $Ax\in M$ for every $x\in M$; or, briefly, $A(M)\subset M$.

Let $A\in \mathcal{L}(X)$. The operator $T$ is said to belong to Deddens algebra $\mathcal{D}_A$ if there exists $M=M(T)>0$ such that
$$\|A^nTf\|\leq M\|A^nf\|
$$
for each $n\in \mathbb{N}$ and $f\in X$.

The study of Deddens algebra was originally introduced by Deddens \cite{D}, where he assumed that $A$ is an invertible operator and $\sup_{n\in \mathbb{N}}\|A^nTA^{-n}\|<\infty$. Later, 
it received the attention of many, see \cite{D,T,K,P,PE,PS,LM,SD,PSD}. Recently,
Petrovic\cite{PSD} studied the Deddens algebra associated to compact composition operators $C_{\varphi}$ on Hardy spaces, where $A$ is not necessarily be invertible. And they have demonstrated that the operators $M_g$ and $T_z$ belong to the Deddens algebra $\mathcal{D}_{C_{\varphi}}$. It is worth to point out that compact operator on $H^2$ is not invertible.

Let us begin to present the boundedness of composition operator and multiplication operator on $B_1$.

\begin{lemma}\cite{W}\label{le6}
Let $\varphi$ be an  analytic  self-map of $\mathbb{D}$. Then the composition operator $C_{\varphi}$ is bounded on $B_1$ if and only if $$\sup_{a\in\mathbb{D}}\|C_{\varphi}\sigma_a\|_{B_1}<\infty.
$$
\end{lemma}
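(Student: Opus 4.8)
The plan is to read off both implications from the atomic description of $B_1$: every $f\in B_1$ can be written $f=\sum_{k\ge1}c_k\sigma_{a_k}$ (the series converging in $B_1$, hence pointwise on $\mathbb{D}$) with $\{c_k\}\in\ell^1$, and $\|f\|_{*}:=\inf\{\sum_k|c_k|\}$, the infimum over all such representations, is a norm equivalent to $\|\cdot\|_{B_1}$; this is the structure theorem for the minimal M\"{o}bius invariant space from Arazy et al.\ \cite{AJ}. Observe in particular that $\|\sigma_a\|_{*}\le1$ for every $a\in\mathbb{D}$ (use the one-term representation with coefficient $1$), so $\sup_{a\in\mathbb{D}}\|\sigma_a\|_{B_1}<\infty$.

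For the necessity, if $C_\varphi:B_1\to B_1$ is bounded then $\|C_\varphi\sigma_a\|_{B_1}\le\|C_\varphi\|\,\|\sigma_a\|_{B_1}$ for all $a$, and since $\sup_a\|\sigma_a\|_{B_1}<\infty$ we immediately get $\sup_{a\in\mathbb{D}}\|C_\varphi\sigma_a\|_{B_1}<\infty$.

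For the sufficiency, put $K:=\sup_{a\in\mathbb{D}}\|C_\varphi\sigma_a\|_{B_1}<\infty$ and fix $f\in B_1$; by the norm equivalence choose a representation $f=\sum_kc_k\sigma_{a_k}$ with $\sum_k|c_k|\lesssim\|f\|_{B_1}$. Evaluating the pointwise identity $f=\sum_kc_k\sigma_{a_k}$ at the point $\varphi(z)$ gives $C_\varphi f=\sum_k c_k(\sigma_{a_k}\circ\varphi)=\sum_kc_k\,C_\varphi\sigma_{a_k}$ pointwise on $\mathbb{D}$. On the other hand $\sum_k|c_k|\,\|C_\varphi\sigma_{a_k}\|_{B_1}\le K\sum_k|c_k|<\infty$, so the series $\sum_kc_k\,C_\varphi\sigma_{a_k}$ converges absolutely in the Banach space $B_1$ to some $g$ with $\|g\|_{B_1}\le K\sum_k|c_k|$. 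Since point evaluations on $B_1$ are bounded (Lemma \ref{le4}), convergence in $B_1$ forces pointwise convergence, whence $g=C_\varphi f$; therefore $C_\varphi f\in B_1$ and $\|C_\varphi f\|_{B_1}\le K\sum_k|c_k|\lesssim K\|f\|_{B_1}$. Thus $C_\varphi$ is bounded on $B_1$.

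The only genuinely non-formal ingredient is the atomic decomposition of $B_1$ together with the equivalence of the infimum norm $\|\cdot\|_{*}$ and the differentiated norm $|f(0)|+|f'(0)|+\int_{\mathbb{D}}|f''|\,dA$; once that is available the argument is the soft ``test on atoms'': any linear map already defined on $H(\mathbb{D})$ that sends each atom $\sigma_a$ into $B_1$ with uniformly bounded $B_1$-norm is automatically bounded on $B_1$. The same remark applies verbatim to $M_g$, and variants of it lie behind the boundedness criteria for the other operators considered in this paper; for the composition operator the statement is precisely Lemma \ref{le6}. If one wishes to avoid invoking $\|\sigma_a\|_{*}\le1$, the uniform bound $\sup_a\|\sigma_a\|_{B_1}<\infty$ may instead be verified directly from $\sigma_a''(z)=2\bar a(|a|^2-1)(1-\bar a z)^{-3}$ and the standard estimate $\int_{\mathbb{D}}|1-\bar a z|^{-3}\,dA(z)\approx(1-|a|^2)^{-1}$.
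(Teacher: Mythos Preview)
Your argument is correct and is precisely the standard ``test on atoms'' proof that the atomic description of $B_1$ is designed to give. Note, however, that the paper does not supply its own proof of this lemma: it is quoted from Wulan--Xiong \cite{W} and stated without argument, so there is no in-paper proof to compare against. Your approach---use the Arazy--Fisher--Peetre decomposition $f=\sum_k c_k\sigma_{a_k}$ together with the equivalence of the infimum norm and the $B_1$-norm, then push $C_\varphi$ through the series termwise and close up with absolute convergence in $B_1$ plus continuity of point evaluations---is exactly the argument one expects in \cite{W}; it is also the mechanism underlying the original M\"{o}bius-invariance theory in \cite{AJ}. The only mild quibble is stylistic: the final paragraph, while accurate, reads as commentary rather than proof and could be trimmed, and you might state explicitly at the outset that $\|\sigma_a\|_{B_1}$ is uniformly bounded (either via $\|\sigma_a\|_{*}\le1$ or the direct computation you sketch) so that the necessity direction is completely transparent.
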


In particular, we give another description of sufficiency condition for the boundedness of the composition operator in the following theorem.

\begin{theorem}\label{th6}
Let $\varphi$ be an  analytic  self-map of $\mathbb{D}$ such that $\varphi(0)=0$. Then the composition operator $C_{\varphi}$ is bounded on $B_1$  whenever $\varphi'\in Z_1\cap H^{\infty}$.
\end{theorem}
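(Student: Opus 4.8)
The plan is to verify the criterion of Lemma \ref{le6}, i.e.\ to show that $\sup_{a\in\mathbb{D}}\|C_\varphi\sigma_a\|_{B_1}<\infty$ under the hypothesis $\varphi'\in Z_1\cap H^\infty$ and $\varphi(0)=0$. First I would write out the equivalent norm: since $(\sigma_a\circ\varphi)(0)=\sigma_a(0)=a$ has modulus at most $1$, and $(\sigma_a\circ\varphi)'(0)=\sigma_a'(\varphi(0))\varphi'(0)=\sigma_a'(0)\varphi'(0)=-(1-|a|^2)\varphi'(0)$, the first two terms of $\|C_\varphi\sigma_a\|_{B_1}$ are bounded by $1+|\varphi'(0)|$, uniformly in $a$. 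So the whole problem reduces to estimating the area integral $\int_{\mathbb D}\bigl|(\sigma_a\circ\varphi)''(w)\bigr|\,dA(w)$ uniformly in $a$.

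Next I would expand the second derivative by the chain rule:
\begin{align*}
(\sigma_a\circ\varphi)''(w)=\sigma_a''(\varphi(w))\,\varphi'(w)^2+\sigma_a'(\varphi(w))\,\varphi''(w).
\end{align*}
Using the explicit forms $\sigma_a'(z)=\dfrac{|a|^2-1}{(1-\overline a z)^2}$ and $\sigma_a''(z)=\dfrac{2\overline a(|a|^2-1)}{(1-\overline a z)^3}$, together with the elementary bounds $|\sigma_a'(z)|\le \dfrac{2}{1-|z|^2}$ and $|\sigma_a''(z)|\le \dfrac{C}{(1-|z|^2)^2}$ (valid uniformly in $a\in\mathbb D$ on $\mathbb D$, since $|1-\overline a z|\ge \tfrac12(1-|z|^2)$ up to an absolute constant), I would bound the two pieces by
\begin{align*}
\int_{\mathbb D}\frac{|\varphi'(w)|^2}{(1-|\varphi(w)|^2)^2}\,dA(w)\quad\text{and}\quad \int_{\mathbb D}\frac{|\varphi''(w)|}{1-|\varphi(w)|^2}\,dA(w).
\end{align*}
The second of these is controlled by $\|g\|_\infty$-type reasoning: since $\varphi$ is a self-map, $1-|\varphi(w)|^2\le 1$ fails to help directly, but one uses instead that $\varphi'\in Z_1$ has, by the $2$-nd derivative characterization of $Z_1$ (as in Remark \ref{re3} applied with $g$ replaced by $\varphi'$ and recalling $Z_1=F(1,-1,1)$ and Lemma \ref{le5}), the property $\sup_a\int_{\mathbb D}|\varphi''(z)|(1-|\sigma_a(z)|^2)\,dA(z)<\infty$; combining this with the Schwarz–Pick inequality $\dfrac{|\varphi'(w)|}{1-|\varphi(w)|^2}\le \dfrac{1}{1-|w|^2}$ to absorb the factor $1-|\varphi(w)|^2$, and invoking $\varphi'\in H^\infty$ to handle the remaining $|\varphi'(w)|$, gives the bound on the $\varphi''$ term. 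For the $\varphi'$-squared term, I would again use Schwarz–Pick to replace one factor $\dfrac{|\varphi'(w)|}{1-|\varphi(w)|^2}$ by $\dfrac{1}{1-|w|^2}$, leaving $\int_{\mathbb D}\dfrac{|\varphi'(w)|}{(1-|w|^2)(1-|\varphi(w)|^2)}\,dA(w)$, and then once more Schwarz–Pick on the surviving quotient to land on $\int_{\mathbb D}\dfrac{dA(w)}{(1-|w|^2)^2}$ — which diverges, so this crude route is too lossy and must be refined: instead I would keep only one Schwarz–Pick step and estimate $\int_{\mathbb D}\dfrac{|\varphi'(w)|^2}{(1-|\varphi(w)|^2)^2}\,dA(w)\le \|\varphi'\|_\infty\int_{\mathbb D}\dfrac{|\varphi'(w)|}{(1-|\varphi(w)|^2)^2}\,dA(w)$ and recognize the latter integral, after a change of variables or via the standard Littlewood–Paley/area-integral identity, as comparable to $\int_{\mathbb D}\dfrac{dA(\zeta)}{(1-|\zeta|^2)^2}$ counted with multiplicity — still divergent, which signals that one genuinely needs the $Z_1$ hypothesis here too, i.e.\ rewrite $\dfrac{1}{(1-|\varphi(w)|^2)^2}$ in terms of $(1-|\sigma_a(\varphi(w))|^2)$-weights for a suitable $a$ and apply the full strength of Lemma \ref{le5} to $\varphi'$.

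The main obstacle, therefore, is precisely this $\varphi'$-squared term: extracting a uniform bound requires combining the Schwarz–Pick inequality (to convert a $\varphi$-side weight into a $w$-side weight) with the $F(1,-1,1)$ membership of $\varphi'$ in the sharp way, rather than through naive pointwise domination. I would model this step on Lemma $6$ of \cite{LLZ} — the same inequality already used in the proof of Theorem \ref{th2} — applied to the function $\varphi$ composed appropriately, which is exactly the tool that converts $\int_{\mathbb D}|f'||g'|\,dA$ into $\|g\|_{Z_1}(|f'(0)|+\int|f''|\,dA)$; here the roles are played by $\varphi$ and $\varphi'$. Once both integrals are bounded by a constant times $\|\varphi'\|_\infty+\|\varphi'\|_{Z_1}$ independently of $a$, Lemma \ref{le6} yields the boundedness of $C_\varphi$ on $B_1$, completing the proof.
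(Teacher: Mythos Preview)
Your reduction via pointwise bounds on $|\sigma_a'|$ and $|\sigma_a''|$ is the step that fails. The bounds themselves are correct, but once you replace $|\sigma_a''(\varphi(w))|$ by $C(1-|\varphi(w)|^2)^{-2}$ and $|\sigma_a'(\varphi(w))|$ by $C(1-|\varphi(w)|^2)^{-1}$, the resulting integrals need not be finite under the hypothesis: already for the identity map $\varphi(z)=z$ (where $\varphi'\equiv 1\in Z_1\cap H^\infty$ and $\varphi(0)=0$) the first integral becomes $\int_{\mathbb D}(1-|w|^2)^{-2}\,dA(w)=\infty$. So no amount of Schwarz--Pick juggling or appeal to Lemma~6 of \cite{LLZ} can rescue the argument \emph{after} this reduction; you have discarded exactly the piece of information that matters, namely that $\int_{\mathbb D}|\sigma_a''|\,dA\lesssim 1$ uniformly in $a$.

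The paper's proof keeps that information by estimating in the opposite order. Working with a general $f\in B_1$ (or equivalently with $f=\sigma_a$), it bounds $|\varphi'(w)|^2\le\|\varphi'\|_\infty^2$ pointwise and retains $\int_{\mathbb D}|f''(\varphi(w))|\,dA(w)$; since $|f''|$ is subharmonic and $\varphi(0)=0$, Littlewood's subordination gives $\int_{\mathbb D}|f''\circ\varphi|\,dA\le\int_{\mathbb D}|f''|\,dA\le\|f\|_{B_1}$. For the remaining term $\int_{\mathbb D}|f'(\varphi(w))|\,|\varphi''(w)|\,dA(w)$ one applies Lemma~6 of \cite{LLZ} with $g=\varphi'\in Z_1$ and with the role of $f'$ played by $f'\circ\varphi$ (whose derivative is $f''(\varphi)\varphi'$), yielding a bound by $\|\varphi'\|_{Z_1}\big(|f'(0)|+\|\varphi'\|_\infty\|f\|_{B_1}\big)$. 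Your instinct to use that lemma was right; it just has to be applied to $\sigma_a'(\varphi(\cdot))$ and $\varphi''$ \emph{before} any pointwise domination, not to the divergent weighted integrals you arrived at.
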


\begin{proof}
Suppose that $\varphi'\in Z_1\cap H^{\infty}$. Then we get
\begin{align*}
\|C_{\varphi}f\|_{B_1}=&\|f(\varphi(z))\|_{B_1}\\
=&|f(\varphi(0))|+|f'(\varphi(0))\varphi'(0)|+\int_{\mathbb{D}}|f''(\varphi(w))\cdot(\varphi'(w))^2
+f'(\varphi(w))\cdot\varphi''(w)| \,dA(w)\\
\leq &|f(\varphi(0))|+|f'(\varphi(0))\varphi'(0)|+\int_{\mathbb{D}}|f''(\varphi(w))\cdot(\varphi'(w))^2| \,dA(w)
\\
&+\int_{\mathbb{D}}|f'(\varphi(w))\cdot\varphi''(w)| \,dA(w)\\
\lesssim  &\|f\|_{\infty}+\|\varphi'\|_{\infty} \|f\|_{B_{1}}+\|\varphi'\|^{2}_{\infty} \|f\|_{B_{1}}
+\|\varphi\|_{Z_{1}} \|\varphi'\|_{\infty} \|f\|_{B_{1}} \\
\lesssim  & (\|\varphi'\|^{2}_{\infty}+\|\varphi\|_{Z_{1}} \|\varphi'\|_{\infty}+ \|\varphi'\|_{\infty} +1)\|f\|_{B_1},
\end{align*}
which implies that  $C_{\varphi}$ is bounded on $B_1$ if  $\varphi'\in Z_1\cap H^{\infty}$.
\end{proof}

\begin{theorem}\label{th7}
Suppose that $M_g$ is multiplication operator on $B_1$. Then $M_g$ is bounded if and only if $g\in B_1$.
\end{theorem}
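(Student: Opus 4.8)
The plan is to prove both directions by exploiting the fact that $M_g = I_g + T_g + g(0)\,\mathrm{Id}$ up to a constant, or more directly by mimicking the structure of the proofs of Theorems \ref{th1} and \ref{th2}, together with the product estimate from Theorem \ref{th5}. For the sufficiency, suppose $g \in B_1$. Then for any $f \in B_1$ we have $M_g f = fg$, and Theorem \ref{th5} gives immediately $\|M_g f\|_{B_1} = \|fg\|_{B_1} \leq (2\pi+2)\|f\|_{B_1}\|g\|_{B_1}$, so $M_g$ is bounded with $\|M_g\| \leq (2\pi+2)\|g\|_{B_1}$. This is the easy half and requires no new work beyond invoking Theorem \ref{th5}.

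For the necessity, assume $M_g$ is bounded on $B_1$. Apply $M_g$ to the constant function $f \equiv 1$, which lies in $B_1$ with $\|1\|_{B_1} = 1$. Then $M_g 1 = g$, so $g = M_g 1 \in B_1$ and in fact $\|g\|_{B_1} = \|M_g 1\|_{B_1} \leq \|M_g\|$. This is the cleanest possible argument: just as in the converse direction of Theorem \ref{th1} (where $f=1$ was tested) and Theorem \ref{th2}, testing on the constant function pins down the symbol. Combining the two directions yields the norm comparison $\|g\|_{B_1} \leq \|M_g\| \leq (2\pi+2)\|g\|_{B_1}$, which one may record as a remark.

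I do not anticipate a genuine obstacle here, since both implications reduce to results already established in the excerpt. The only point requiring a small amount of care is ensuring that $M_g$ genuinely maps into $B_1$ (not merely into $H(\mathbb{D})$) in the sufficiency direction — but this is precisely the content of Theorem \ref{th5}, so the product $fg$ has finite $B_1$-norm whenever $f, g \in B_1$. One could alternatively give a self-contained proof of sufficiency by expanding $\|fg\|_{B_1} = |f(0)g(0)| + |(fg)'(0)| + \int_{\mathbb{D}} |(fg)''|\,dA$ via the Leibniz rule, bounding the three cross terms using H\"older's inequality, Lemma \ref{le3}, and Lemma \ref{le4} (the sup-norm estimate $\|f\|_\infty \leq \pi\|f\|_{B_1}$), exactly as in the proof of Theorem \ref{th1}; but routing through Theorem \ref{th5} is shorter and I would present it that way.
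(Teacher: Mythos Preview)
Your proposal is correct and follows exactly the paper's own proof: sufficiency is obtained by invoking Theorem \ref{th5}, and necessity by testing on the constant function $f\equiv 1$ to get $\|g\|_{B_1}=\|M_g 1\|_{B_1}\leq \|M_g\|$. The additional norm comparison $\|g\|_{B_1}\leq \|M_g\|\leq (2\pi+2)\|g\|_{B_1}$ you record is a harmless extra.
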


\begin{proof}
For any $f\in B_1$, if $g\in B_1$, we have $M_g$ is bounded on $B_1,$ by Theorem \ref{th5}.

Conversely, let $M_g$ be a bounded operator on $B_1.$ Then with $f=1$, we get $\|M_g\|\geq\|M_g1\|_{B_1}=\|g\|_{B_1}$ which implies that $g\in B_1$.
\end{proof}

In the following theorem, we will consider the algebra $\mathcal{D}_{C_{\varphi}}$, in which the operator $C_{\varphi}$ is a bounded composition operator.
For $n\in \mathbb{N}$, it clear that $C^{n}_{\varphi}f=f\circ\varphi\circ\cdots\circ\varphi$. For simplicity of  the notation, we write
$\varphi_{n}$ instead of  $\varphi\circ\cdots\circ\varphi$.

\begin{theorem}\label{th8}
Let $g\in B_1$ and $\varphi$ be an analytic self-map of $\mathbb{D}$ with $\varphi(0)=0$ such that $C_{\varphi}$ is bounded on $B_1$ . Then the operators $M_g$, $T_g$ and $I_g$ belong to the Deddens algebra $\mathcal{D}_{C_{\varphi}}$.
\end{theorem}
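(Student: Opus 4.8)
The plan is to show that each of the three operators $M_g$, $T_g$, $I_g$ satisfies a Deddens-type estimate with respect to $C_\varphi$, i.e. that there is a constant $M>0$ with $\|C_\varphi^n S f\|_{B_1}\le M\|C_\varphi^n f\|_{B_1}$ for all $n\in\mathbb{N}$ and all $f\in B_1$, where $S$ denotes any of $M_g, T_g, I_g$. The key structural fact I would exploit is the Schröder-type commutation: since $\varphi(0)=0$, composition with $\varphi_n$ interacts nicely with evaluation at $0$, and more importantly $C_\varphi^n S f = C_\varphi^n(\cdot)$ applied to things built from $g$ and $f\circ\varphi_n$. Concretely, I would first record that $C_\varphi^n (S f) = S_{g\circ\varphi_n}\,(C_\varphi^n f)$ is \emph{not} literally true for $T_g, I_g$, so instead I would work directly with the $B_1$-norm $\|h\|_{B_1}=|h(0)|+|h'(0)|+\int_{\mathbb{D}}|h''|\,dA$ applied to $h=C_\varphi^n(Sf)=(Sf)\circ\varphi_n$, and use the chain rule to expand $((Sf)\circ\varphi_n)''$ in terms of $(Sf)'\circ\varphi_n$, $(Sf)''\circ\varphi_n$, $\varphi_n'$ and $\varphi_n''$.

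For $M_g$: write $(M_gf)\circ\varphi_n = (g\circ\varphi_n)(f\circ\varphi_n)$, so $\|C_\varphi^n M_g f\|_{B_1}=\|(g\circ\varphi_n)\cdot(f\circ\varphi_n)\|_{B_1}$. Since $\varphi_n$ is a self-map of $\mathbb{D}$ with $\varphi_n(0)=0$ and $g\in B_1\subset H^\infty$, one has $g\circ\varphi_n\in H^\infty$ with $\|g\circ\varphi_n\|_\infty\le\|g\|_\infty\le\pi\|g\|_{B_1}$ (Lemma \ref{le4}). The product estimate of Theorem \ref{th5} gives $\|(g\circ\varphi_n)(f\circ\varphi_n)\|_{B_1}\lesssim \|g\circ\varphi_n\|_{B_1}\|f\circ\varphi_n\|_{B_1}$, but $\|g\circ\varphi_n\|_{B_1}$ need not be uniformly bounded in $n$; to avoid this I would instead estimate $\|(g\circ\varphi_n)(f\circ\varphi_n)\|_{B_1}$ directly by expanding the second derivative and bounding each of the three resulting integrals: the term $\int|g''(\varphi_n)\varphi_n'^2\,(f\circ\varphi_n)|$ by $\|f\circ\varphi_n\|_\infty\le\pi\|f\circ\varphi_n\|_{B_1}$ times $\int|(g\circ\varphi_n)''|\le\|g\circ\varphi_n\|_{B_1}$ — which again has the same problem. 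The honest fix, and the crux of the argument, is the observation that since $C_\varphi$ is \emph{bounded} on $B_1$, $\|g\circ\varphi_n\|_{B_1}=\|C_\varphi^n g\|_{B_1}\le\|C_\varphi\|^n\|g\|_{B_1}$, which still grows; so instead I would use that $T_z$ and the structure of $B_1$ let us reduce to a \emph{fixed} bounded quantity. I expect the intended route is: $\|C_\varphi^n S f\|_{B_1} = \|S_{?}\, C_\varphi^n f\|$ where the relevant symbol is $g$ composed appropriately, combined with the uniform bound $\sup_a\|C_\varphi\sigma_a\|_{B_1}<\infty$ from Lemma \ref{le6}, to get $\|g\circ\varphi_n\|_{B_1}\lesssim 1$ uniformly — this last uniform bound is the main obstacle and the key lemma to establish, presumably via $g=\sum c_k\sigma_{a_k}\in\ell^1$ so $g\circ\varphi_n=\sum c_k(\sigma_{a_k}\circ\varphi_n)$ and $\|\sigma_{a_k}\circ\varphi_n\|_{B_1}\le\sup_a\|C_\varphi^n\sigma_a\|_{B_1}$, which one would need to bound uniformly in both $a$ and $n$.

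Granting the uniform estimate $\sup_n\|g\circ\varphi_n\|_{B_1}=:K_g<\infty$, the rest is routine. For $M_g$, Theorem \ref{th5} and Lemma \ref{le4} give $\|C_\varphi^n M_g f\|_{B_1}=\|(g\circ\varphi_n)(f\circ\varphi_n)\|_{B_1}\le(2\pi+2)K_g\|f\circ\varphi_n\|_{B_1}=(2\pi+2)K_g\|C_\varphi^n f\|_{B_1}$, so $M_g\in\mathcal{D}_{C_\varphi}$ with $M=(2\pi+2)K_g$. For $T_g$: note $(T_gf)\circ\varphi_n$ has derivative $((T_gf)\circ\varphi_n)' = (f\circ\varphi_n)(g'\circ\varphi_n)\varphi_n'= (f\circ\varphi_n)\cdot(g\circ\varphi_n)'$, so $((T_gf)\circ\varphi_n)(z)=\int_0^z (f\circ\varphi_n)(w)\,(g\circ\varphi_n)'(w)\,dw = T_{g\circ\varphi_n}(f\circ\varphi_n)(z)$ (using $\varphi_n(0)=0$ so both sides vanish at $0$). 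Hence $\|C_\varphi^n T_g f\|_{B_1}=\|T_{g\circ\varphi_n}(f\circ\varphi_n)\|_{B_1}\le(1+\pi)\|(g\circ\varphi_n)-(g\circ\varphi_n)(0)\|_{B_1}\|f\circ\varphi_n\|_{B_1}\le(1+\pi)K_g'\|C_\varphi^n f\|_{B_1}$ by Theorem \ref{th1}, where $K_g'=\sup_n\|(g\circ\varphi_n)-g(0)\|_{B_1}<\infty$. Similarly $(I_gf)\circ\varphi_n = I_{g\circ\varphi_n}(f\circ\varphi_n)$ since $((I_gf)\circ\varphi_n)'=(f'\circ\varphi_n)(g\circ\varphi_n)\varphi_n'=(f\circ\varphi_n)'(g\circ\varphi_n)$, and Theorem \ref{th2} (noting $g\in B_1\subset Z_1\cap H^\infty$ by Remark \ref{re3}, together with the uniform control $\sup_n\big(\|g\circ\varphi_n\|_\infty+\|g\circ\varphi_n\|_{Z_1}\big)<\infty$, the latter again a consequence of Lemma \ref{le6} as in Remark \ref{re3}) gives $\|C_\varphi^n I_g f\|_{B_1}\le M''\|C_\varphi^n f\|_{B_1}$. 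Thus $M_g, T_g, I_g\in\mathcal{D}_{C_\varphi}$. The single genuine difficulty is the uniform-in-$n$ boundedness of $\|g\circ\varphi_n\|_{B_1}$ (and of the corresponding $H^\infty$ and $Z_1$ norms); everything else is a direct application of Theorems \ref{th1}, \ref{th2}, \ref{th5} and Lemma \ref{le4}.
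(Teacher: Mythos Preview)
Your overall architecture matches the paper's exactly: establish the intertwining relations $C_\varphi^n M_g = M_{g\circ\varphi_n}C_\varphi^n$, $C_\varphi^n T_g = T_{g\circ\varphi_n}C_\varphi^n$, $C_\varphi^n I_g = I_{g\circ\varphi_n}C_\varphi^n$ (all of which \emph{are} literally true when $\varphi(0)=0$---your initial hesitation was unfounded, and you correctly derive them a few lines later), and then bound the operator norms of $M_{g\circ\varphi_n}$, $T_{g\circ\varphi_n}$, $I_{g\circ\varphi_n}$ by constants times $\|g\circ\varphi_n\|_{B_1}$ via Theorems~\ref{th5}, \ref{th1}, and~\ref{th2} together with Remark~\ref{re3}, respectively.

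Where you and the paper diverge is precisely at the step you flag as ``the single genuine difficulty'': the uniform bound $\sup_n\|g\circ\varphi_n\|_{B_1}<\infty$. The paper simply asserts $\|g\circ\varphi_n\|_{B_1}\lesssim\|g\|_{B_1}$ with the one-line justification ``since $\varphi_n(\mathbb{D})\subset\mathbb{D}$'' and moves on. Your instinct that this is the crux is correct, and in fact the bound is false in general. Take $\varphi(z)=z^2$ (so $\varphi(0)=0$ and $\varphi'=2z\in Z_1\cap H^\infty$, whence $C_\varphi$ is bounded on $B_1$ by Theorem~\ref{th6}) and $g(z)=z\in B_1$. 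Then $g\circ\varphi_n=z^{2^n}$, and for $m\ge2$ one computes directly $\|z^m\|_{B_1}=\int_{\mathbb{D}}m(m-1)|z|^{m-2}\,dA(z)=2(m-1)$, so $\|g\circ\varphi_n\|_{B_1}=2(2^n-1)\to\infty$. Testing the Deddens inequality on $f\equiv1$ then gives $\|C_\varphi^n M_g 1\|_{B_1}=\|z^{2^n}\|_{B_1}\to\infty$ while $\|C_\varphi^n 1\|_{B_1}=1$, so $M_z\notin\mathcal{D}_{C_{z^2}}$; the same test function shows $T_z\notin\mathcal{D}_{C_{z^2}}$. Your proposed rescue via Lemma~\ref{le6} and the $\ell^1$-representation of $g$ cannot succeed either, since it would again require $\sup_{a,n}\|C_\varphi^n\sigma_a\|_{B_1}<\infty$, which fails for the same reason. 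The gap you identified is therefore real, it is shared by the paper's own argument, and it cannot be closed under the stated hypotheses.
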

\begin{proof}
 For each $n\in \mathbb{N}$, we see that
\begin{align*}
C^n_{\varphi}M_gf=&C^{n}_{\varphi}(gf)
=(g\circ\varphi_{n})(f\circ\varphi_{n})
=M_{g\circ\varphi_n}C^{n}_{\varphi}f.
\end{align*}
Since $\varphi_{n}(\mathbb{D})\subset \mathbb{D}$, it follows that
$$\|M_{g\circ\varphi_n}f\|_{B_1}=\|(g\circ\varphi_n)f\|_{B_1}\lesssim\|g\|_{B_1}\|f\|_{B_1}
$$
and therefore,
$$\|C^n_{\varphi}M_gf\|_{B_1}=\|M_{g\circ\varphi_n}C^{n}_{\varphi}f\|_{B_1}\lesssim\|g\|_{B_1}\|C^{n}_{\varphi}f\|_{B_1},
$$
where $f\in B_{1}(\mathbb{D})$. This implies that $M_g\in \mathcal{D}_{C_{\varphi}}$.

Next, we have
$$C^n_{\varphi}T_gf(z)=C^n_{\varphi}\left(\int^{z}_{0}f(w)g'(w)dw\right)=\int^{\varphi_{n}(z)}_{0}f(w)g'(w) \,dw.
$$
Since $\varphi$ is an analytic self-map of $\mathbb{D}$ satisfying $\varphi(0)=0$, we have $\varphi_{n}(0)=0$ and therefore,
\begin{align*}
T_{g\circ\varphi_n}C^{n}_{\varphi}f(z)=&T_{g\circ\varphi_n}\left(f(\varphi_n(z))\right)\\
=&\int^{z}_{0}f(\varphi_{n}(w))g'(\varphi_{n}(w))\varphi_{n}'(w) \,dw\\
=&\int^{\varphi_{n}(z)}_{0}f(w)g'(w) \,dw,
\end{align*}
where $f\in B_1$.
It shows that $C^n_{\varphi}T_g=T_{g\circ\varphi_n}C^{n}_{\varphi}$.
By Theorem \ref{th1}, we have
\begin{align*}
\|C^{n}_{\varphi}T_gf(z)\|_{B_1}=&\|T_{g\circ\varphi_n}C^{n}_{\varphi}f(z)\|_{B_1}
\lesssim \|g\circ\varphi_n\|_{B_1}\|C^{n}_{\varphi}f(z)\|_{B_1}
\lesssim \|g\|_{B_1}\|C^{n}_{\varphi}f(z)\|_{B_1}.
\end{align*}
which gives $T_g\in \mathcal{D}_{C_{\varphi}}$.

  Finally, we have
$$C^n_{\varphi}I_gf(z)=C^{n}_{\varphi}\left(\int^{z}_{0}f'(w)g(w) \,dw\right)=\int^{\varphi_{n}(z)}_{0}f'(w)g(w) \,dw
$$
and
\begin{align*}
I_{g\circ\varphi_{n}}C^n_{\varphi}f(z)=&I_{g\circ\varphi_{n}}(f(\varphi_{n}(z)))\\
=&\int^{z}_{0}f'(\varphi_{n}(w))g(\varphi_{n}(w))\varphi_{n}'(w) \,dw\\
=&\int^{\varphi_n(z)}_{0}f'(w)g(w) \,dw,
\end{align*}
where $f\in B_1$.
Therefore, $C^n_{\varphi}I_g=I_{g\circ\varphi_{n}}C^n_{\varphi}$.
By Theorem \ref{th2} and Remark \ref{re3}, we find that
$$\|C^n_{\varphi}I_gf(z)\|_{B_1}=\|I_{g\circ\varphi_{n}}C^n_{\varphi}f(z)\|_{B_1}
\lesssim\|g\circ\varphi_{n}\|_{B_1}\|C^n_{\varphi}f(z)\|_{B_1}\lesssim\|g\|_{B_1}\|C^n_{\varphi}f(z)\|_{B_1}
$$
for all $f\in B_1$.
We thus deduce that $I_g\in \mathcal{D}_{C_{\varphi}}$.
\end{proof}


\section{Essential norms of Volterra type operators on $B_{1}$}\label{sec4}
Suppose $X$ is a Banach space and $T$ is a bounded linear operator on $X$. The essential norm of $T$ is defined 
to be
$$\|T\|_{e}=\inf \{\|T-K\|:K ~\text{is a compact operator on } B_1\}.
$$
Obviously, the essential norm of $T$ is 0 if and only if $T$ is compact. For more results, we invite the reader to refer to \cite{SE,LLL}.
In this section, we  characterize the essential  norm of linear operator on $B_1$, which generalizes the conclusion of Liu et al.\cite{LLX}

\begin{theorem}\label{th9}
Every bounded operator $T_g$ on $B_1$ is compact.
\end{theorem}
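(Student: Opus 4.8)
\emph{Proof proposal.} The plan is to derive the compactness of $T_g$ from the standard normal‑families criterion. By Lemma \ref{le4} we have $B_1 \subset H^\infty$ with $\|f\|_\infty \le \pi\|f\|_{B_1}$, so norm‑bounded subsets of $B_1$ are locally bounded, hence normal; and since $f \mapsto |f(0)| + |f'(0)| + \int_{\mathbb{D}}|f''|\,dA$ is lower semicontinuous under uniform convergence on compacta (Fatou), a norm‑bounded locally uniformly convergent sequence has its limit in $B_1$. Thus it suffices to show: whenever $\{f_n\}$ is bounded in $B_1$, say $\|f_n\|_{B_1} \le 1$, and $f_n \to 0$ uniformly on compact subsets of $\mathbb{D}$, then $\|T_g f_n\|_{B_1} \to 0$. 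Recall that $g \in B_1$ by Theorem \ref{th1}, so $\int_{\mathbb{D}}|g''|\,dA < \infty$; moreover, as in the proof of Theorem \ref{th1}, $g' \in A^2$ with $\|g'\|_{A^2} \le \|g - g(0)\|_{B_1}$, and likewise $\|f_n'\|_{A^2} \le \|f_n'\|_{H^1} \le \|f_n'\|_{\mathcal{D}^1} \le \|f_n\|_{B_1} \le 1$.

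Using $(T_g f_n)(0) = 0$, $(T_g f_n)' = f_n g'$ and $(T_g f_n)'' = f_n' g' + f_n g''$, I would split
\begin{align*}
\|T_g f_n\|_{B_1} \le |f_n(0)|\,|g'(0)| + \int_{\mathbb{D}}|f_n'(z)|\,|g'(z)|\,dA(z) + \int_{\mathbb{D}}|f_n(z)|\,|g''(z)|\,dA(z).
\end{align*}
The first term tends to $0$ since $f_n(0) \to 0$. For the third term, $\|f_n\|_\infty \le \pi$ for all $n$ by Lemma \ref{le4}, so $|f_n g''| \le \pi|g''| \in L^1(\mathbb{D},dA)$, and as $f_n \to 0$ pointwise the dominated convergence theorem gives $\int_{\mathbb{D}}|f_n|\,|g''|\,dA \to 0$.

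The hard part is the middle term $\int_{\mathbb{D}}|f_n'|\,|g'|\,dA$: here $f_n'$ is only controlled uniformly in $A^2$ and need not tend to $0$ uniformly near $\mathbb{T}$, so a direct dominated‑convergence argument is unavailable. I would instead decompose $\mathbb{D} = \{|z| \le r\} \cup \{r < |z| < 1\}$. Given $\varepsilon > 0$, since $g' \in A^2$ one can fix $r \in (0,1)$ with $\big(\int_{r < |z| < 1}|g'(z)|^2\,dA(z)\big)^{1/2} < \varepsilon$, and then Cauchy--Schwarz gives $\int_{r < |z| < 1}|f_n'|\,|g'|\,dA \le \|f_n'\|_{A^2}\big(\int_{r < |z| < 1}|g'|^2\,dA\big)^{1/2} < \varepsilon$ for every $n$. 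On the compact set $\{|z| \le r\}$, Cauchy's estimate turns $f_n \to 0$ uniformly into $f_n' \to 0$ uniformly, so $\int_{|z| \le r}|f_n'|\,|g'|\,dA \le \|g'\|_{A^1}\sup_{|z| \le r}|f_n'| < \varepsilon$ for all large $n$ (note $g' \in A^1$ since $A^2 \subset A^1$ on $\mathbb{D}$). Hence $\int_{\mathbb{D}}|f_n'|\,|g'|\,dA \to 0$, so $\|T_g f_n\|_{B_1} \to 0$, and $T_g$ is compact. The only genuinely delicate point I anticipate is this splitting of the disc for the middle term; the rest is routine.
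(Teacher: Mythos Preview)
Your argument is correct. The sequential criterion is legitimately available here (you supply the two ingredients it needs: Montel via $B_1\subset H^\infty$, and closedness of norm balls under local uniform limits via Fatou), and your treatment of the three terms is sound. In particular the ``hard'' middle term is handled cleanly: $g'\in A^2$ follows exactly as in the proof of Theorem \ref{th1}, the tail $\int_{r<|z|<1}|f_n'||g'|\,dA$ is controlled uniformly in $n$ by Cauchy--Schwarz, and on $\{|z|\le r\}$ the derivatives $f_n'$ go to zero uniformly.

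The paper takes a different route: rather than attacking $T_g$ directly, it introduces the dilations $g_r(z)=g(rz)$, shows each $T_{g_r}$ is compact, and then uses $\|T_g-T_{g_r}\|\lesssim\|g-g_r\|_{B_1}\to 0$ to conclude $\|T_g\|_e=0$. The point of passing to $g_r$ is that $g_r'$ and $g_r''$ are bounded on $\mathbb{D}$ (they live on the compact disc $\overline{\mathbb D}_r$), so the annular tails in the compactness proof for $T_{g_r}$ can be disposed of by a crude absolute--continuity/uniform--bound argument without invoking the $A^2$ estimate. Your approach trades that extra approximation layer for the Cauchy--Schwarz step; it is more direct and a bit shorter, and it makes transparent exactly which integrability of $g'$ is doing the work. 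The paper's approach, on the other hand, packages the essential‑norm statement and the compactness statement together and reuses the same dilation device for Theorem~\ref{th11}.
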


\begin{proof}
By definition, we know  $\|T_{g}\|_{e}\geq0$.

Next we show that $\|T_{g}\|_{e}\leq0$. To do this we define the following operators:
$$T_{g_r}f:=\int^z_0rf(w)g_r'(w) \,dw,
$$
where $g_r(z)=g(rz)$ and $r\in(0,1)$. It is easy to see that $T_{g_r}$  is  a compact operator on $B_1$ for $g\in B_1$.
In fact,   if $T_{g}$ is  a bounded operator in $B_1$, then $g\in B_1$.
Suppose that $\{ f_{n}\}_{n=1}^{\infty} \subset B_1$ with $\|f_{n}\|_{B_1}\leq 1$, and $f_{n}\rightarrow 0$ uniformly on compact subsets of $\mathbb{D}$.
Then
\begin{align*}
\|T_{g_r}f_n\|_{B_1}=&\|f_{n}(rz)g'(rz)\|_{\mathcal{D}^1}\\
\lesssim&\int_{\mathbb{D}}|f'_{n}(rz)g'(rz)| \,dA(z)+\int_{\mathbb{D}}|f_{n}(rz)g''(rz)| \,dA(z)\\
=&\int_{\mathbb{D}_{\delta_1}}|f'_{n}(rz)g'(rz)| \,dA(z)+\int_{\mathbb{D}\backslash \mathbb{D}_{\delta_1}}|f'_{n}(rz)g'(rz)| \,dA(z)\\
&+\int_{\mathbb{D}_{\delta_1}}|f_{n}(rz)g''(rz)| \,dA(z)+\int_{\mathbb{D}\backslash \mathbb{D}_{\delta_1}}|f_{n}(rz)g''(rz)| \,dA(z),
\end{align*}
where $\delta_1<1,$ and $\mathbb{D}_{\delta_1}= \{z: |z|<\delta_1\}$ are compact subsets of $\mathbb{D}$.

Note that
$$\int_{\mathbb{D}}|f'_{n}(rz)g'(rz)| \,dA(z)\lesssim \|f_{n}\|_{B_1} \|g\|_{B_1}   ~\text{ and }~    \int_{\mathbb{D}}|f_{n}(rz)g''(rz)| \,dA(z)\lesssim \|f_{n}\|_{B_1} \|g\|_{B_1} .
$$
Using the theorem of absolute continuity of Lebegue measure, we conclude that
$$\int_{\mathbb{D}\backslash \mathbb{D}_{\delta_1}}|f'_{n}(rz)g'(rz)| \,dA(z) <\epsilon  ~\text{ and }~    \int_{\mathbb{D}\backslash \mathbb{D}_{\delta_1}}|f_{n}(rz)g''(rz)| \,dA(z) <\epsilon .
$$

On the other hand, using a basic result of complex analysis (see [p. 151 of \cite{CJ}])
 we know that if $f_{n}\rightarrow 0,$ then $f_{n}' \rightarrow 0$ uniformly on  compact subsets of $\mathbb{D}$. Consequently,
$$\int_{\mathbb{D}_{\delta_1}}|f'_{n}(rz)g'(rz)| \,dA(z) ~\text{ and }~ \int_{\mathbb{D}_{\delta_1}}|f_{n}(rz)g''(rz)| \,dA(z)
$$
converge to $0$ when $n\rightarrow \infty$. This implies that $T_{g,r}$ is compact.

Meanwhile, we have
$$\|T_{g}\|_{e}\leq\|T_{g}-T_{g_r}\|=\|T_{g-g_r}\|\lesssim\|g-g_r\|_{B_1}
$$
for $r\in(0,1)$. Similarly with the above computation, we have
$$\lim_{r\rightarrow 1^{-}}\|g-g_r\|_{B_1}=0
$$
so that $\|T_{g}\|_{e}\leq0$. Hence we deduce that $\|T_{g}\|_{e}=0$, which completes the proof.
\end{proof}


\begin{theorem}\label{th10}
If $I_{g}$ is bounded operator on $B_1$, then $\|I_{g}\|_{e} \approx \|g\|_{\infty}.$
\end{theorem}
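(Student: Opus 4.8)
The claim is the two-sided estimate, so the plan is to establish $\|g\|_{\infty}\lesssim\|I_g\|_e$ and $\|I_g\|_e\lesssim\|g\|_{\infty}$ in turn, the second being much the harder.

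For the lower bound I would use the normalised test functions $f_a=\sigma_a-a$, $a\in\mathbb D$. Since $f_a(z)-f_a(0)=\sigma_a(z)-\sigma_a(0)$, one checks that $f_a\to0$ uniformly on compact subsets of $\mathbb D$ as $|a|\to1^-$, while $\|f_a\|_{B_1}=|\sigma_a'(0)|+\int_{\mathbb D}|\sigma_a''|\,dA$ stays between two positive constants (this is the standard estimate for $\int_{\mathbb D}|1-\bar a z|^{-3}\,dA$, implicit in the bound $\|\sigma_a\|_{B_1}\lesssim1$ used in Theorem \ref{th2}). As $I_g$ annihilates constants, $I_gf_a=I_g\sigma_a$, and arguing exactly as in the proof of Theorem \ref{th2} — via $\|h\|_{B_1}\ge\|h'\|_{H^1}$ (Lemma \ref{le3}), the involution property of $\sigma_a$ on $\mathbb T$, and subharmonicity of $|g\circ\sigma_a|$ — one gets $\|I_gf_a\|_{B_1}\gtrsim|g(a)|$. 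Now let $K$ be any compact operator on $B_1$; a bounded sequence in $B_1$ that converges to $0$ uniformly on compacta is carried by $K$ to a norm-null sequence, so choosing $a_n\to\partial\mathbb D$ with $|g(a_n)|\to\|g\|_{\infty}$ (possible by the maximum modulus principle) gives $\|I_g-K\|\ge\|f_{a_n}\|_{B_1}^{-1}\bigl(\|I_gf_{a_n}\|_{B_1}-\|Kf_{a_n}\|_{B_1}\bigr)\gtrsim|g(a_n)|-o(1)$; letting $n\to\infty$ and infimising over $K$ yields $\|I_g\|_e\gtrsim\|g\|_{\infty}$, which together with $\|g\|_{\infty}\lesssim\|I_g\|$ settles the easy half.

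For the upper bound I would approximate $I_g$ by compact operators via dilations. Put $D_rf(z)=f(rz)$, $0<r<1$; using the norm $\|f\|_{B_1}=|f(0)|+|f'(0)|+\int_{\mathbb D}|f''|\,dA$ one sees $\|D_r\|\le1$, each $D_r$ is compact, and $D_r\to I$ strongly. Hence $I_gD_r$ is compact and $\|I_g\|_e\le\liminf_{r\to1^-}\|I_g(I-D_r)\|$. Writing $h=(I-D_r)f$ and expanding $(I_gh)''=h''g+h'g'$,
$$\|I_gh\|_{B_1}\le\|g\|_{\infty}\Bigl(|h'(0)|+\int_{\mathbb D}|h''|\,dA\Bigr)+\int_{\mathbb D}|h'(w)|\,|g'(w)|\,dA(w)\le2\|g\|_{\infty}\|f\|_{B_1}+\int_{\mathbb D}|h'|\,d\mu_g,$$
with $d\mu_g=|g'|\,dA$. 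Splitting $\mathbb D=\mathbb D_\rho\cup(\mathbb D\setminus\mathbb D_\rho)$: on $\mathbb D_\rho$, $|h'(w)|=|f'(w)-rf'(rw)|\lesssim(1-r)$ uniformly over $\|f\|_{B_1}\le1$ (boundedness of the evaluations $f\mapsto f'(w),f''(w)$ on $\mathbb D_\rho$), so $\int_{\mathbb D_\rho}|h'|\,d\mu_g\to0$ as $r\to1^-$, uniformly in $f$; on $\mathbb D\setminus\mathbb D_\rho$, $\|h'\|_{H^1}\lesssim\|h\|_{\mathcal D^1}\lesssim\|f\|_{B_1}$ (Lemma \ref{le3}), so by Carleson's embedding theorem $\int_{\mathbb D\setminus\mathbb D_\rho}|h'|\,d\mu_g\lesssim\|f\|_{B_1}\sup_{|I|\le1-\rho}\tfrac{1}{|I|}\mu_g(S(I))$. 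Letting $r\to1^-$ and then $\rho\to1^-$ gives $\|I_g\|_e\lesssim\|g\|_{\infty}+\gamma(g)$, where
$$\gamma(g):=\limsup_{|a|\to1^-}\int_{\mathbb D}|g'(w)|\,\frac{1-|a|^2}{|1-\bar a w|^2}\,dA(w)=\limsup_{|a|\to1^-}\int_{\mathbb D}\bigl|(g\circ\sigma_a)'(w)\bigr|\,dA(w).$$

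The heart of the matter — and the step I expect to be genuinely hard — is thus the bound $\gamma(g)\lesssim\|g\|_{\infty}$ for $g\in Z_1\cap H^{\infty}$. The hypothesis $g\in Z_1$ (i.e.\ $|g'|\,dA$ a Carleson measure) only gives $\gamma(g)<\infty$ with a bound a priori in terms of $\|g\|_{Z_1}$, so one must bring in $g\in H^{\infty}$: this forces $g\in\mathcal B$ (whence $(1-|w|^2)|g'(w)|\lesssim\|g\|_{\infty}$) and $g\in BMOA$ (whence $|g'|^2(1-|w|^2)\,dA$ is Carleson with constant $\lesssim\|g\|_{\infty}^2$, while $\int_{\mathbb D}|(g\circ\sigma_a)'|^2\,dA=\|g'\|_{A^2}^2$ is Möbius-invariant). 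I would estimate $\int_{\mathbb D}|(g\circ\sigma_a)'|\,dA$ by a Cauchy--Schwarz splitting $|g'|=|g'|^{1/2}\cdot|g'|^{1/2}$ tuned to the pseudohyperbolic scale $1-|a|^2$ near $a$ — the Bloch estimate on one factor, the $BMOA$/Carleson information on the other — using that the total mass of $|(g\circ\sigma_a)'|^2\,dA$ is fixed while the part of $|(g\circ\sigma_a)'|$ exceeding $O(\|g\|_{\infty})$ must concentrate on a shrinking set as $|a|\to1^-$, which forces the $L^1$-norm to be $O(\|g\|_{\infty})$ in the limit. Granting $\gamma(g)\lesssim\|g\|_{\infty}$, the two halves combine to give $\|I_g\|_e\approx\|g\|_{\infty}$.
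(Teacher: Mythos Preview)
Your lower bound is the paper's argument verbatim: the same test functions $\sigma_{a}-a$, the same weak-null property, and the same estimate $\|I_g(\sigma_a-a)\|_{B_1}\ge|g(a)|$ imported from the proof of Theorem~\ref{th2}.

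For the upper bound you and the paper part ways entirely. The paper does not approximate $I_g$ by compacts at all; it writes the single line
\[
\|I_g\|_e\ \le\ \|I_g\|\ \le\ C\|g\|_\infty,
\]
invoking the proof of Theorem~\ref{th2} for the second inequality. So from the paper's point of view the ``much harder'' half you describe does not exist. Your dilation scheme $I_gD_r\to I_g$ is a legitimate alternative, and it correctly isolates the Carleson-type remainder $\gamma(g)=\limsup_{|a|\to1}\int_{\mathbb D}|g'|\,|\sigma_a'|\,dA$.

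The genuine gap in your proposal is the final step, $\gamma(g)\lesssim\|g\|_\infty$ for $g\in Z_1\cap H^\infty$. The sketch you give does not close: straight Cauchy--Schwarz against the M\"obius-invariant Dirichlet integral returns $\|g'\|_{A^2}$, which is not dominated by $\|g\|_\infty$; inserting the weight $(1-|w|^2)^{-1}$ on one side to exploit the Bloch bound $(1-|w|^2)|g'(w)|\lesssim\|g\|_\infty$ leaves a divergent factor on the other; and the heuristic ``large values of $|(g\circ\sigma_a)'|$ concentrate on a shrinking set'' has no quantitative form that controls the $L^1$ norm. Observe that the inequality you are after is precisely the limsup version of $\|g\|_{Z_1}\lesssim\|g\|_\infty$ --- and that is exactly what the paper's one-line argument also tacitly assumes, since Theorem~\ref{th2} in fact only yields $\|I_g\|\lesssim\|g\|_\infty+\|g\|_{Z_1}$. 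You have correctly located the real difficulty and pushed further than the paper's proof does; but the crux $\gamma(g)\lesssim\|g\|_\infty$ remains unproved in your write-up, just as $\|I_g\|\lesssim\|g\|_\infty$ is unproved in the paper's.
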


\begin{proof}
From the proof of Theorem \ref{th2}, we have
$$\|I_g\|_{e}=\inf\|I_g-K\|\leq\|I_g\|\leq C\|g\|_{\infty},
$$
where $C$ is a positive constant.

We next show that $\|I_g\|_{e}\geq \|g\|_{\infty}$.
Choose $a_n\in \mathbb{D}$ such that $|a_n|\rightarrow1$ as $n\rightarrow\infty$. Let $f_n(z)=\sigma_{a_n}(z)-a_n$.
It is obvious that $\|f_n\|_{B_1}=1$. Since $\{f_n\}$ converges to zero uniformly on compact subsets of $\mathbb{D}$, for every compact operator $K$ on $B_1$, we obtain $\|Kf_n\|_{B_1}\rightarrow0$ as $n\rightarrow\infty$. Therefore,
\begin{align*}
\|I_g-K\|&\geq\lim_{n\rightarrow\infty}\sup\|(I_g-K)f_n\|_{B_1}\\
&\geq\lim_{n\rightarrow\infty}\sup(\|I_gf_n\|_{B_1}-\|Kf_n\|_{B_1})\\
&=\lim_{n\rightarrow\infty}\sup\|I_gf_n\|_{B_1}.
\end{align*}
Similar to the proof of Theorem \ref{th2}, we get
\begin{align*}
\|I_gf_n\|_{B_1}=\|g(z)f^{'}_n(z)\|_{\mathcal{D}^{1}}\geq|g(\sigma_{a_n}(0))|=|g(a_{n})|.
\end{align*}
As the choice of the sequence $\{a_n\}\subset\mathbb{D}$ is arbitrary, we have $\|I_g\|_{e}\geq\|g\|_{\infty}$, which is completes the proof.
\end{proof}

\begin{theorem}\label{th11}
Every bounded operator $M_g$ on $B_1$ is compact.
\end{theorem}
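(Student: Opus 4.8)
The plan is to argue exactly as in the proof of Theorem~\ref{th9}: one approximates $M_g$ in operator norm by a family of compact operators obtained from dilations of the symbol, and then concludes that $\|M_g\|_e=0$. By Theorem~\ref{th7}, $M_g$ is bounded on $B_1$ precisely when $g\in B_1$, so I assume $g\in B_1$ from now on; it then suffices to prove $\|M_g\|_e=0$.

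For $r\in(0,1)$ put $g_r(z)=g(rz)$ and let $M_{g_r}$ denote multiplication by $g_r$. Since $g_r$ is analytic on $\frac{1}{r}\mathbb{D}\supset\overline{\mathbb{D}}$, the functions $g_r,g_r',g_r''$ are all bounded on $\overline{\mathbb{D}}$. To see that each $M_{g_r}$ is compact, take $\{f_n\}\subset B_1$ with $\|f_n\|_{B_1}\le 1$ and $f_n\to 0$ uniformly on compact subsets of $\mathbb{D}$; by the Cauchy estimates $f_n'\to 0$ uniformly on compact subsets as well, and in particular $f_n(0)\to 0$ and $f_n'(0)\to 0$. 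Writing
\[
\|M_{g_r}f_n\|_{B_1}=|g_r(0)f_n(0)|+|(g_rf_n)'(0)|+\int_{\mathbb{D}}\bigl|g_r''f_n+2g_r'f_n'+g_rf_n''\bigr|\,dA,
\]
the first two terms tend to $0$. For the integral, fix $\delta\in(0,1)$ and split $\mathbb{D}=\mathbb{D}_\delta\cup(\mathbb{D}\setminus\mathbb{D}_\delta)$ with $\mathbb{D}_\delta=\{|z|<\delta\}$. Over $\mathbb{D}_\delta$ the integrand tends to $0$ uniformly in $z$ (using $f_n,f_n'\to 0$ uniformly there and the boundedness of $g_r,g_r',g_r''$), so that part of the integral tends to $0$ as $n\to\infty$. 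Over $\mathbb{D}\setminus\mathbb{D}_\delta$ one estimates using the uniform bounds $\|f_n\|_\infty\le\pi$ (Lemma~\ref{le4}), $\int_{\mathbb{D}}|f_n'|\,dA\lesssim\|f_n\|_{B_1}$ (via Lemma~\ref{le3} together with $H^1\subset A^1$) and $\int_{\mathbb{D}}|f_n''|\,dA\le\|f_n\|_{B_1}$, together with the smallness of the area of $\mathbb{D}\setminus\mathbb{D}_\delta$ for $\delta$ near $1$ and the absolute continuity of Lebesgue measure; letting $\delta\to 1$ then yields $\|M_{g_r}f_n\|_{B_1}\to 0$, so $M_{g_r}$ is compact.

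It remains to let $r\to 1^-$. By Theorem~\ref{th5} we have $\|M_g-M_{g_r}\|=\|M_{g-g_r}\|\le(2\pi+2)\|g-g_r\|_{B_1}$, and, just as in the proof of Theorem~\ref{th9}, $\lim_{r\to 1^-}\|g-g_r\|_{B_1}=0$: indeed $g_r(0)=g(0)$, $g_r'(0)=rg'(0)\to g'(0)$, and $\int_{\mathbb{D}}|g''-g_r''|\,dA\to 0$ because $g''\in L^1(\mathbb{D},dA)$ and dilation is continuous in $L^1$. Hence $\|M_g\|_e\le\inf_{0<r<1}\|M_g-M_{g_r}\|=0$, and $M_g$ is therefore compact.

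The only genuinely non-routine point is the compactness of $M_{g_r}$, and inside that argument the delicate term is $\int_{\mathbb{D}\setminus\mathbb{D}_\delta}|g_r(w)f_n''(w)|\,dA(w)$. In the Volterra setting of Theorem~\ref{th9} the analogous top-order term has $f_n''$ evaluated at a contracted point, which ranges over a fixed compact subset of $\mathbb{D}$, so the estimate is immediate; for $M_{g_r}$ there is no such contraction in the second-derivative term, and the bound has to be extracted purely from the uniform $L^1$-control of $f_n''$ and the measure-theoretic smallness of $\mathbb{D}\setminus\mathbb{D}_\delta$. Making this step quantitative, and uniform in $n$, is the crux of the whole proof.
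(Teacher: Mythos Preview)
There is a genuine, unfillable gap, because the statement itself is false. Take $g\equiv 1$: then $g\in B_1$, $M_g$ is the identity on $B_1$, and the identity on an infinite-dimensional Banach space is never compact. More generally, Theorem~\ref{th12} gives $\sigma(M_g)=\overline{g(\mathbb{D})}$, which for non-constant $g\in B_1$ is an uncountable set and hence incompatible with compactness.

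The flaw in your argument is precisely the step you single out. For the term
\[
\int_{\mathbb{D}\setminus\mathbb{D}_\delta}|g_r(w)\,f_n''(w)|\,dA(w)
\]
the only control available on $f_n''$ is $\int_{\mathbb{D}}|f_n''|\,dA\le 1$. Absolute continuity of the Lebesgue integral makes $\int_E|h|\,dA$ small for small $|E|$ only for a \emph{fixed} $h\in L^1$; it does not give this uniformly over a bounded family in $L^1$ unless that family is uniformly integrable, and $\{f_n''\}$ need not be. Concretely, with $f_n=\sigma_{a_n}-a_n$ and $|a_n|\to 1$ (the very test family used in Theorem~\ref{th10}) one has $\|f_n\|_{B_1}\approx 1$ and $f_n\to 0$ locally uniformly, yet a direct computation shows that the $L^1$-mass of $f_n''=\sigma_{a_n}''$ concentrates near the boundary, so $\int_{\mathbb{D}\setminus\mathbb{D}_\delta}|f_n''|\,dA$ does not tend to $0$ uniformly in $n$ as $\delta\to 1$. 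Taking $g\equiv 1$ (hence $g_r\equiv 1$) then gives $\|M_{g_r}f_n\|_{B_1}=\|f_n\|_{B_1}\not\to 0$, so $M_{g_r}$ is not compact and the whole approximation scheme collapses. The paper's own proof merely asserts that the argument of Theorem~\ref{th9} carries over; since the conclusion is false, it cannot.
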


\begin{proof}
The proof is similar to Theorem \ref{th9}, so we omit its details.
\end{proof}

\section{Spectrum of Volterra type operators on $B_{1}(\mathbb{D})$}\label{sec5}
The spectrum of integral operators in different spaces has attracted the attention of many scholars. The spectrum of integral operator on weighted Bergman space are characterized by Aleman and Constantin\cite{AC}.
Later, Constantin \cite{CO} obtained the spectrum of Volterra type operators on Fock spaces.
Mengestie\cite{MT} studied of the spectrum of Volterra type operators on Fock-Sobolev spaces. Mengestie\cite{MT2} also obtained the spectrum of $T_g$ in
terms of a closed disk of radius twice the coefficient of the highest degree term in a polynomial expansion of $g$. For more results, see\cite{MB,BJ}.
Recently, Lin et al. described the spectrum of the multiplication operator and Volterra type operator $I_g$ in \cite{LQ2}, respectively.
Inspired by the above results, it is natural to discuss the spectrum of the multiplication operator and the Volterra type operators in $B_{1}$.

\begin{theorem}\label{th12}
Suppose that $M_g$ is bounded on $B_1$. Then we have
$\sigma(M_g)=\overline{g(\mathbb{D})}.$
\end{theorem}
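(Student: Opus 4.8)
The plan is to prove the two inclusions $\sigma(M_g)\subseteq\overline{g(\mathbb{D})}$ and $\overline{g(\mathbb{D})}\subseteq\sigma(M_g)$ separately, exploiting that $B_1\subseteq H^\infty$ (Lemma \ref{le4}), so point evaluations are bounded functionals on $B_1$. First I would handle the easy inclusion $\overline{g(\mathbb{D})}\subseteq\sigma(M_g)$. If $\lambda=g(w)$ for some $w\in\mathbb{D}$, then $M_{g-\lambda}=M_g-\lambda I$ has range contained in $\{f\in B_1:f(w)=0\}$, which is a proper subspace of $B_1$ (the constants are in $B_1$ and do not vanish at $w$); hence $M_g-\lambda I$ is not surjective and $\lambda\in\sigma(M_g)$. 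Since $\sigma(M_g)$ is closed, the whole closure $\overline{g(\mathbb{D})}$ lies in $\sigma(M_g)$.

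For the reverse inclusion, suppose $\lambda\notin\overline{g(\mathbb{D})}$; I must show $M_g-\lambda I$ is invertible on $B_1$. The natural candidate for the inverse is $M_h$ where $h=1/(g-\lambda)$, so the key step is to verify that $h\in B_1$ whenever $g\in B_1$ and $g-\lambda$ is bounded away from $0$ on $\mathbb{D}$; once this is known, Theorem \ref{th7} gives that $M_h$ is bounded, and $M_hM_{g-\lambda}=M_{g-\lambda}M_h=I$ is immediate, so $\lambda\notin\sigma(M_g)$. To show $h\in B_1$ I would use the norm $\|h\|_{B_1}=|h(0)|+|h'(0)|+\int_{\mathbb{D}}|h''(z)|\,dA(z)$. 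Writing $h=1/(g-\lambda)$, we have $h'=-g'/(g-\lambda)^2$ and $h''=-g''/(g-\lambda)^2+2(g')^2/(g-\lambda)^3$; since $|g-\lambda|\geq\delta>0$ on $\mathbb{D}$, the bound
\begin{align*}
\int_{\mathbb{D}}|h''(z)|\,dA(z)\leq\frac{1}{\delta^2}\int_{\mathbb{D}}|g''(z)|\,dA(z)+\frac{2}{\delta^3}\int_{\mathbb{D}}|g'(z)|^2\,dA(z)
\end{align*}
reduces matters to controlling $\int_{\mathbb{D}}|g'|^2\,dA$. This last integral is finite for $g\in B_1$: indeed $g'\in H^1$ by Lemma \ref{le3} (since $\|g'\|_{H^1}\le\|g'\|_{\mathcal D^1}\le\|g\|_{B_1}$), and $H^1\subset A^2$ with $\|g'\|_{A^2}\le\|g'\|_{H^1}$, exactly the embedding quoted in the preliminaries and already used in the proof of Theorem \ref{th1}. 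Hence $h\in B_1$, completing the argument.

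The main obstacle I anticipate is precisely the verification that $h=1/(g-\lambda)\in B_1$: unlike $H^\infty$ or the Bloch space, membership in $B_1$ is not obviously preserved under composition with the reciprocal map, because the $B_1$-norm involves an $L^1$ integral of the second derivative rather than a pointwise growth condition. The computation above shows it does go through, but it genuinely relies on the nontrivial embedding $B_1\hookrightarrow \mathcal D^2$ (equivalently $g'\in A^2$), so this is the step that must be carried out carefully rather than waved through. Everything else — the two spectral inclusions and the algebra $M_hM_{g-\lambda}=I$ — is routine.
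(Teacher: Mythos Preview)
Your proposal is correct and follows essentially the same approach as the paper: both prove the inclusion $g(\mathbb{D})\subset\sigma(M_g)$ via non-surjectivity of $M_{g-\lambda}$ (the paper phrases it contrapositively, solving $(g-\lambda)f=1$), and both handle $\sigma(M_g)\subset\overline{g(\mathbb{D})}$ by verifying directly that $h=1/(g-\lambda)\in B_1$ using the formula for $h''$ together with the embedding $g'\in H^1\subset A^2$ already exploited in Theorem~\ref{th1}, then invoking Theorem~\ref{th7}. Your identification of the $\int_{\mathbb{D}}|g'|^2\,dA$ estimate as the only nontrivial step matches exactly what the paper does.
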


\begin{proof}
 Suppose that $\lambda\notin\sigma(M_g)$. Then  $M_g-\lambda I$ is invertible. As $1 \in B_1,$ there exists an $f\in B_1$ such that $(g(z)-\lambda)f(z)=1$ for all $z\in \mathbb{D}$, which implies that $\lambda\notin g(\mathbb{D})$. Thus, $g(\mathbb{D})\subset\sigma(M_g)$.

For the other way inclusion, we let $\lambda\notin \overline{g(\mathbb{D})}$. Then we can choose a $t>0$ such that $|g(z)-\lambda|>t$ for all $z\in \mathbb{D}$. This  shows that
$h=(g-\lambda)^{-1}$ is a bounded analytic function on $\mathbb{D}$. For all $g\in B_1$, we get
\begin{align*}
\|h\|_{B_1}\leq& t(|h(0)|+\|h'\|_{\mathcal{D}^{1}})\\
\leq&  \left(\frac{1}{|g(0)-\lambda|}+\|\frac{g'}{(g-\lambda)^2}\|_{\mathcal{D}^{1}}\right)\\
\leq&   \left(\frac{1}{|g(0)-\lambda|}+\frac{g'(0)}{(g(0)-\lambda)^2}+\int_{\mathbb{D}}\left|\frac{g''(z)}{(g(z)-\lambda)^2}-\frac{2(g')^2}{(g(z)-\lambda)^{3}}\right| \,dA(z)\right)\\
\lesssim &    \|g\|_{B_1}+ \|g\|^2_{B_1}.
\end{align*}
 Hence, $h\in B_1$.  Then $M_h$ is bounded on $B_1,$ by Theorem \ref{th7}.
Since $M_h=M_{(g-\lambda)^{-1}}=M_{g-\lambda}^{-1}$, we see that $M_{g-\lambda}$ is invertible and thus, $\lambda\notin \sigma(M_g)$.
Therefore,  $\sigma(M_g)\subset\overline{g(\mathbb{D})}$. Since the spectrum set is closed,  we conclude that $\sigma(M_g)=\overline{g(\mathbb{D})}$.
\end{proof}

\begin{lemma}\label{le8}\cite{R}
Let $T$ be a bounded linear operator on a Banach space $X$, and $T$ be  compact. If $\dim X=\infty$, then $\sigma(T)=\{0\} \cup \{\mbox{eigen values of $T$}\} $.
\end{lemma}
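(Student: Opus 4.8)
The plan is to invoke the Riesz--Schauder theory of compact operators; the argument naturally breaks into three pieces: showing $0\in\sigma(T)$, showing every nonzero spectral value is an eigenvalue, and the trivial remark that every eigenvalue lies in the spectrum.

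First I would note that, since $\dim X=\infty$, the operator $T$ cannot be invertible: if $T^{-1}$ existed in $\mathcal{L}(X)$, then $I=TT^{-1}$ would be the composition of a compact operator with a bounded operator, hence compact, forcing the closed unit ball of $X$ to be compact and contradicting $\dim X=\infty$. Therefore $0\in\sigma(T)$.

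Next, fix $\lambda\in\sigma(T)$ with $\lambda\neq 0$ and write $T-\lambda I=-\lambda(I-S)$, where $S=\lambda^{-1}T$ is again compact. The heart of the proof is the Fredholm alternative for $I-S$, which I would establish in the usual steps: (i) $\ker(I-S)$ is finite-dimensional, because $S$ restricted to this closed subspace is the identity, and a compact operator whose restriction to a closed subspace is the identity forces that subspace to be finite-dimensional by Riesz's theorem; (ii) $\mathrm{ran}(I-S)$ is closed, obtained by splitting off the finite-dimensional kernel and checking that $I-S$ is bounded below on a closed complement; and (iii) if $I-S$ is injective then it is surjective. Step (iii) is the main obstacle. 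Assuming $I-S$ is injective but not onto, the subspaces $X_n=\mathrm{ran}((I-S)^n)$ form a strictly decreasing chain of closed subspaces; Riesz's lemma then yields unit vectors $x_n\in X_n$ with $\mathrm{dist}(x_n,X_{n+1})\geq 1/2$, and a short computation using $Sx_n-Sx_m=-(I-S)x_n+(I-S)x_m+x_n-x_m$ shows $\|Sx_n-Sx_m\|\geq 1/2$ for $n<m$, so $\{Sx_n\}$ has no convergent subsequence, contradicting compactness of $S$. Combining (iii) with the open mapping theorem, if $I-S$ were injective it would be boundedly invertible, whence $\lambda\notin\sigma(T)$; contrapositively, $\lambda\in\sigma(T)$ with $\lambda\neq 0$ forces $\ker(I-S)\neq\{0\}$, i.e.\ $\lambda$ is an eigenvalue of $T$.

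Finally, every eigenvalue of $T$ automatically belongs to $\sigma(T)$, and together with $0\in\sigma(T)$ from the first step this gives $\sigma(T)=\{0\}\cup\{\mbox{eigenvalues of }T\}$. The only genuinely delicate point in the whole argument is step (iii); the remaining assertions are either elementary or direct applications of Riesz's lemma and the open mapping theorem.
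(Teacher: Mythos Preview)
Your argument is correct and is precisely the classical Riesz--Schauder proof. Note, however, that the paper does not supply its own proof of this lemma: it is simply quoted from Rudin's \emph{Functional Analysis} (reference~\cite{R}), so there is nothing in the paper to compare your approach against. What you have written is essentially the textbook derivation one would find in that reference.
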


\begin{theorem}\label{th13}
Suppose that $T_g$ is a bounded operator in $B_1$. Then $\sigma(T_g)=\{0\}.$
\end{theorem}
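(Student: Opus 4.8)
The plan is to exploit Theorem~\ref{th9}, which tells us that every bounded $T_g$ on $B_1$ is in fact compact, together with the Riesz theory for compact operators recorded in Lemma~\ref{le8}. Since $B_1$ is infinite-dimensional, Lemma~\ref{le8} gives $\sigma(T_g)=\{0\}\cup\{\text{eigenvalues of }T_g\}$, so the whole problem reduces to showing that $T_g$ has no nonzero eigenvalue. I would therefore suppose $T_g f=\lambda f$ for some $f\in B_1$ and some $\lambda\neq 0$, and derive $f=0$.

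The key computation is to differentiate the eigenvalue equation. From $(T_gf)(z)=\int_0^z f(w)g'(w)\,dw=\lambda f(z)$ we first read off $\lambda f(0)=(T_gf)(0)=0$, hence $f(0)=0$. Differentiating gives $f(z)g'(z)=\lambda f'(z)$ for all $z\in\mathbb{D}$, i.e. $f'(z)=\frac{1}{\lambda}g'(z)f(z)$. This is a linear first-order ODE; away from the zeros of $f$ it integrates to $f(z)=f'(0)\lambda^{-1}\exp\!\big(\tfrac{1}{\lambda}(g(z)-g(0))\big)\cdot(\text{const})$, but more cleanly: the function $h(z)=f(z)\exp\!\big(-\tfrac{1}{\lambda}(g(z)-g(0))\big)$ satisfies $h'(z)=\big(f'(z)-\tfrac{1}{\lambda}g'(z)f(z)\big)\exp(\cdots)=0$, so $h$ is constant, say $h\equiv c$. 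Then $f(z)=c\exp\!\big(\tfrac1\lambda(g(z)-g(0))\big)$. Evaluating at $z=0$ forces $c=f(0)=0$, hence $f\equiv 0$. (Note $g\in B_1\subset H^\infty$ by Lemma~\ref{le4}, so the exponential factor is a genuine bounded nonvanishing analytic function on $\mathbb{D}$ and the manipulation is legitimate; in particular $f$ cannot vanish identically on a subarc unless it is identically zero.)

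Having shown $0$ is the only possible eigenvalue, I conclude $\sigma(T_g)\subseteq\{0\}$ from Lemma~\ref{le8}. For the reverse inclusion I note that $\sigma(T_g)$ is nonempty (the spectrum of a bounded operator on a nontrivial complex Banach space is always nonempty), so $0\in\sigma(T_g)$; alternatively, $T_g$ is not surjective since $(T_gf)(0)=0$ for every $f$, so it cannot be invertible. Hence $\sigma(T_g)=\{0\}$.

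I expect the only point requiring care is the uniqueness step in the ODE argument: one must make sure the integrating-factor manipulation is valid globally on $\mathbb{D}$, which is fine here because $g$ is analytic on all of $\mathbb{D}$ (so $g'$ is too) and $\exp(\tfrac1\lambda(g-g(0)))$ is analytic and never zero on $\mathbb{D}$; thus $h=f\cdot\exp(-\tfrac1\lambda(g-g(0)))$ is a well-defined analytic function on $\mathbb{D}$ with $h'\equiv 0$, forcing $h$ constant without any appeal to where $f$ vanishes. Everything else is a direct application of the compactness result Theorem~\ref{th9} and the Riesz--Schauder Lemma~\ref{le8}.
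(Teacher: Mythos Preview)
Your proof is correct and follows essentially the same route as the paper's: invoke Theorem~\ref{th9} for compactness, apply Lemma~\ref{le8}, then differentiate the eigenvalue equation and solve the resulting ODE to force $f(0)=0$ and hence $f\equiv 0$. Your integrating-factor argument is in fact a bit cleaner than the paper's version, which simply asserts that all nonzero solutions have the form $ce^{g/\lambda}$ and then derives the same contradiction at $z=0$.
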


\begin{proof}
Let $T_g$ be a bounded operator on $B_1.$ Then  $T_g$ is compact, by Theorem \ref{th9}. By Lemma \ref{le8}, we obtain $0\in \sigma(T)$.

Next we prove that $T_g$ has no non-zero eigenvalue. Assume that $T_g$ has an eigenvalue $\lambda\neq0$ with eigenvector $f.$
Then
\begin{equation}\label{eq1}
T_{g}f(z)=\int^{z}_{0}f(w)g'(w) \,dw=\lambda f(z).
\end{equation}
Differentiating the equation  (\ref{eq1}) with respect to $z$, we get
$$f(z)g'(z)=\lambda f'(z).
$$
All nonzero solutions of this equation are of the form $f(z)=ce^{\frac{g(z)}{\lambda}}$ for some $c\neq0$. Setting $z=0$ in (\ref{eq1}) shows that $0=\lambda f(0)$,
which contradicts the last relation about $f.$ Therefore, there is no non-zero eigenvalue for $T_g$. From this, we deduce that $\sigma(T_g)=\{0\}$.
\end{proof}

\begin{theorem}\label{th14}
 If $I_g$ is a bounded operator in $B_1$,  then
$$\sigma(I_g)=\{0\}\cup \overline{g(\mathbb{D})}.
$$
\end{theorem}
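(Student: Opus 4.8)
The plan is to establish the two inclusions $\{0\}\cup\overline{g(\mathbb{D})}\subseteq\sigma(I_g)$ and $\sigma(I_g)\subseteq\{0\}\cup\overline{g(\mathbb{D})}$ by working directly with the resolvent equation $(I_g-\lambda)f=h$, in the spirit of Theorems \ref{th12} and \ref{th13}, rather than through essential spectra. For the first inclusion I would first note that $0\in\sigma(I_g)$, because $(I_gf)(0)=0$ for every $f\in B_1$, so the constant function $1$ never lies in the range of $I_g$ and $I_g$ is not surjective. Next, suppose $\lambda\neq 0$ and $\lambda\in g(\mathbb{D})$, say $g(z_0)=\lambda$ with $z_0\in\mathbb{D}$. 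If $(I_g-\lambda)f=h$ for some $f\in B_1$, then differentiating yields $f'(z)\bigl(g(z)-\lambda\bigr)=h'(z)$ on $\mathbb{D}$, and evaluating at $z_0$ forces $h'(z_0)=0$. Taking $h(z)=z\in B_1$ (so $h'\equiv 1$) exhibits a function not in the range of $I_g-\lambda$, whence $\lambda\in\sigma(I_g)$; the same argument covers the degenerate case $g\equiv\lambda$. Thus $g(\mathbb{D})\subseteq\sigma(I_g)$, and since the spectrum is closed, $\overline{g(\mathbb{D})}\subseteq\sigma(I_g)$.

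For the reverse inclusion, fix $\lambda\neq 0$ with $\lambda\notin\overline{g(\mathbb{D})}$ and put $t=\operatorname{dist}\bigl(\lambda,\overline{g(\mathbb{D})}\bigr)>0$, so that $|g(z)-\lambda|\ge t$ for all $z\in\mathbb{D}$. Guided by the computation above, I would define the candidate inverse
$$(R_\lambda h)(z)=-\frac{h(0)}{\lambda}+\int_{0}^{z}\frac{h'(w)}{g(w)-\lambda}\,dw,\qquad h\in B_1.$$
Since $g-\lambda$ is analytic and zero-free on $\mathbb{D}$, $R_\lambda h\in H(\mathbb{D})$, and a routine comparison of values at $0$ and of derivatives shows $(I_g-\lambda)R_\lambda h=h$. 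Moreover $I_g-\lambda$ is injective: $(I_g-\lambda)f=0$ forces $f'(g-\lambda)\equiv 0$, hence $f$ is constant, and then $-\lambda f(0)=0$ gives $f\equiv 0$. Consequently, once $R_\lambda$ is shown to be bounded on $B_1$, it follows that $I_g-\lambda$ is a bounded bijection with bounded inverse $R_\lambda$, so $\lambda\notin\sigma(I_g)$ and the reverse inclusion holds.

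It then remains to estimate $\|R_\lambda h\|_{B_1}$. Writing $f=R_\lambda h$ we have $f(0)=-h(0)/\lambda$ and $f'=h'/(g-\lambda)$, so $|f(0)|\le|\lambda|^{-1}\|h\|_{B_1}$ and $|f'(0)|\le t^{-1}\|h\|_{B_1}$. Differentiating once more gives
$$f''=\frac{h''}{g-\lambda}-\frac{h'g'}{(g-\lambda)^2},$$
hence
$$\int_{\mathbb{D}}|f''|\,dA\le\frac1t\int_{\mathbb{D}}|h''|\,dA+\frac1{t^2}\int_{\mathbb{D}}|h'||g'|\,dA.$$
The first term is at most $t^{-1}\|h\|_{B_1}$. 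Since $I_g$ bounded forces $g\in Z_1\cap H^{\infty}$ by Theorem \ref{th2}, Lemma~6 of \cite{LLZ} (exactly as invoked in that proof) bounds the second integral by $\lesssim\|g\|_{Z_1}\bigl(|h'(0)|+\int_{\mathbb{D}}|h''|\,dA\bigr)\lesssim\|g\|_{Z_1}\|h\|_{B_1}$. Combining, $\|R_\lambda h\|_{B_1}\lesssim\bigl(|\lambda|^{-1}+t^{-1}+t^{-2}\|g\|_{Z_1}\bigr)\|h\|_{B_1}<\infty$.

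Putting the two inclusions together yields $\sigma(I_g)=\{0\}\cup\overline{g(\mathbb{D})}$. The one genuinely delicate point is the boundedness of $R_\lambda$: the whole estimate hinges on controlling $\int_{\mathbb{D}}|h'||g'|\,dA$ by $\|h\|_{B_1}$, and because the hypothesis only gives $g\in Z_1\cap H^{\infty}$ rather than $g\in B_1$, this cannot be taken from Theorem \ref{th1} but must go through the $Z_1$-estimate of \cite{LLZ}. One should also take care that the non-surjectivity argument in the first inclusion genuinely uses the existence of a zero of $g-\lambda$ in $\mathbb{D}$ (so that the case $\lambda\in g(\mathbb{D})$, including $g\equiv\lambda$, is handled, while $0\in\sigma(I_g)$ is obtained separately).
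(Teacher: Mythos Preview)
Your proof is correct and follows the same overall strategy as the paper: write the resolvent of $I_g-\lambda$ explicitly as
\[
R_\lambda h(z)=h(0)-\tfrac{1}{\lambda}\int_0^z\frac{h'(w)}{1-\frac{1}{\lambda}g(w)}\,dw
\quad\bigl(=\text{const}+I_{(1-\frac{1}{\lambda}g)^{-1}}h(z)\bigr),
\]
and then analyse when this formula defines a bounded operator on $B_1$. The differences are only tactical. For $\lambda\notin\overline{g(\mathbb{D})}$ the paper observes that $R_\lambda$ is, up to an additive constant, the operator $I_{(1-\frac{1}{\lambda}g)^{-1}}$, checks that $(1-\tfrac{1}{\lambda}g)^{-1}\in Z_1\cap H^\infty$ (which is immediate since its derivative is dominated by a constant times $|g'|$), and then simply quotes Theorem~\ref{th2}; you instead unpack that same proof by estimating $\int_{\mathbb{D}}|h'||g'|\,dA$ directly via Lemma~6 of \cite{LLZ}. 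For $\lambda\in g(\mathbb{D})\setminus\{0\}$ the paper argues that the symbol $(1-\tfrac{1}{\lambda}g)^{-1}$ is not in $H^\infty$, so by the necessity part of Theorem~\ref{th2} $R_\lambda$ cannot be bounded; your argument---exhibiting $h(z)=z$ as a function outside the range of $I_g-\lambda$---is more elementary and avoids appealing to the ``only if'' direction of Theorem~\ref{th2}. Either packaging is fine; yours is a bit more self-contained, while the paper's is shorter once Theorem~\ref{th2} is in hand.
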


\begin{proof}
For any constant function $a$, we have
$$(I_ga)(z)=\int_{0}^{z}a'(w)g(w)\,dw=0
$$
which gives $0\in \sigma(I_g)$.

Suppose that $\lambda\in \mathbb{C}\backslash\{0\}$. Note that the equation
$$f-\frac{1}{\lambda}I_gf=h,~\hbox {for} ~h\in B_1,
$$
has a unique analytic solution
$$f(z)=R_{\lambda,g}h(z)=\int^{z}_{0}\frac{h'(\xi)}{1-\frac{1}{\lambda}g(\xi)} \,d\xi+h(0)=I_{(1-\frac{1}{\lambda}g)^{-1}}h(z)+h(0).
$$
(see \cite{CA} for more details). Hence, the resolvent set $\rho(I_g)$ of the bounded operator $I_g$ consists precisely of all points $\lambda\in \mathbb{C}$ for which $R_{\lambda,g}$ is a bounded operator on $B_1$.

If $\lambda\in \mathbb{C}\backslash(\{0\}\cup \overline{g(D)})$, then it is clear that $1-\frac{1}{\lambda}g(z)$ is bounded away from $0$, which implies that  $\frac{1}{1-\frac{1}{\lambda}g(z)} \in H^{\infty}$.   If $I_g$ is a bounded operator in $B_1$, then $g\in H^{\infty}\bigcap Z_{1}$ by Theorem \ref{th2}.
Moreover, it is easy to  show that $ \frac{1}{1-\frac{1}{\lambda}g(z)} \in Z_{1}.$
This implies that the operator $R_{\lambda,g}$ is a bounded operator on $B_1$. It follows that  $\mathbb{C}\backslash(\{0\}\cup \overline{g(\mathbb{D})})\subset \rho(I_g)$. Thus, $\sigma(I_g)\subset(\{0\}\cup \overline{g(\mathbb{D})})$.

On the other hand, if $\lambda\in g(\mathbb{D})$ and $\lambda\neq0$, then $\frac{1}{1-\frac{1}{\lambda}g(\xi)}$ is not bounded, which shows that the operator $R_{\lambda,g}$ is not bounded on $B_1$. Therefore, we obtain that $g(\mathbb{D})\backslash\{0\}\subset\sigma(I_g)$. This together with the fact that  $0\in \sigma(I_g)$ shows that
$$g(\mathbb{D})\cup\{0\}\subset\sigma(I_g)\subset \overline{g(\mathbb{D})}\cup\{0\}.
$$
Since the spectrum $\sigma(I_g)$ is closed, we deduce that $\sigma(I_g)=\overline{g(\mathbb{D})}\cup\{0\}$.
\end{proof}

\end{document}